\newcommand{\thetitle}{How Many Eigenvalues of a Random \\ Symmetric Tensor are Real?}
\newcommand{\thelanguage}{english}
\newcommand{\HN}{\mathbb{N}}
\newcommand{\HQ}{\mathbb{Q}}
\newcommand{\HR}{\mathbb{R}}
\newcommand{\kC}{\mathfrak{C}}
\newcommand{\kO}{\mathfrak{O}}
\newcommand{\kR}{\mathfrak{R}}
\newcommand{\kS}{\mathfrak{S}}
\newcommand{\kT}{\mathfrak{T}}
\newcommand{\cA}{\mathcal{A}}
\newcommand{\cI}{\mathcal{I}}
\newcommand{\cJ}{\mathcal{J}}
\newcommand{\cK}{\mathcal{K}}
\newcommand{\cL}{\mathcal{L}}
\newcommand{\cM}{\mathcal{M}}
\newcommand{\cX}{\mathcal{X}}
\newcommand{\cY}{\mathcal{Y}}
\newcommand{\set}[1]{\left\{#1\right\}}
\newcommand{\cset}[2]{\left\{#1\mid #2\right\}}
\renewcommand{\d}{\mathrm{ d}}
\DeclareMathOperator*{\mean}{\mathbb{E}}
\numberwithin{equation}{section}
\numberwithin{figure}{section}
\theoremstyle{plain}
\newcounter{numbering} \numberwithin{numbering}{section}
\newtheorem{thm}[numbering]{Theorem}
\newtheorem{lemma}[numbering]{Lemma}
\newtheorem{prop}[numbering]{Proposition}
\newtheorem{cor}[numbering]{Corollary}
\theoremstyle{definition}
\newtheorem{dfn}[numbering]{Definition}
\theoremstyle{remark}
\newtheorem*{rem}{Remark}
\crefname{equation}{}{}
\crefname{equation}{}{}
\crefname{figure}{Figure}{Figures}
\crefname{section}{Section}{Sections}
\crefname{lemma}{Lemma}{Lemmata}
\crefname{prop}{Proposition}{Propositions}
\crefname{thm}{Theorem}{Theorems}
\crefname{cor}{Corollary}{Corollaries}
\crefname{dfn}{Definition}{Definitions}
\crefname{notation}{Notations}{Notations}
\crefname{rem}{Remark}{Remarks}
\crefname{claim}{Claim}{claims}
\crefname{table}{Table}{Tables}
\crefname{conj}{Conjecture}{Conjectures}
\begin{document}
\title{\thetitle}
\date{}
\author{Paul Breiding}
\maketitle
\begin{abstract}
This article answers a question posed by Draisma and Horobet \cite{draisma-horobet}, who asked for a closed formula for the expected number of real eigenvalues of a random real symmetric tensor drawn from the Gaussian distribution relative to the Bombieri norm. This expected value is equal to the expected number of real critical points on the unit sphere of a Kostlan polynomial. We also derive an exact formula for the expected absolute value of the determinant of a matrix from the Gaussian Orthogonal Ensemble.
\end{abstract}
\thanks{ Max-Planck Institute for Mathematics in the Sciences, Leipzig, breiding@mis.mpg.de.\\ Partially supported by DFG research grant BU 1371/2-2.}
\section{Introduction}
The title of this article is a homage to the article \cite{real_eigenvalues}, in which Edelman, Kostlan and Shub compute the expected number of real eigenvalues of a matrix filled with i.i.d.\ standard Gaussian random variables. This model of a random matrix is extended to matrices of higher order, called \emph{tensors}.
In \cite{expected_Z_eigenvalues} the author computed the expected number of real eigenvalues of a random tensor, whose entries are i.i.d.\ standard Gaussian random variables. The content of this article is the computation of the expected number of real eigenvalues of a random  \emph{symmetric} tensor. Unlike symmetric matrices, symmetric tensors may have complex eigenvalues. But, as we will see, symmetric tensors have in a sense \emph{more} real eigenvalues than non-symmetric tensors.

In this article, a tensor $A=(a_{i_1,\ldots,i_p})_{1\leq i_1\leq n_1, \ldots,1\leq i_p\leq n_p}$ is an array of numbers arranged in~$p$ dimensions, where the $j$-th dimension is of length $n_j$. We call~$p$ the \emph{order} of $A$. We are interested in the case when $n:=n_1=\ldots=n_p$ are all equal. The space of real $n\times \cdots\times n$ tensors of order $p$ is denoted by $(\HR^{n})^{\otimes p}$.  For~$p\geq 3$ tensors are higher-dimensional analogues of matrices, which form the case $p=2$.
A tensor $A=(a_{i_1,\ldots,i_p})\in (\HR^{n})^{\otimes p}$ is called \emph{symmetric}, if for all permuations on $\pi\in\mathfrak{S}_p$ on~$p$ elements we have $a_{i_1,\ldots,i_p} = a_{i_{\pi(1)},\ldots,i_{\pi(p)}}$. We denote the vector space of symmetric tensors by $S^p(\mathbb{R}^n)$. As in \cite{expected_Z_eigenvalues} we say that a random real tensor $A\in\mathbb{R}^{n\times \cdots \times n}$ is a \emph{Gaussian tensor}, if it has density $(2\pi)^\frac{-n^p}{2}e^{-\frac{1}{2} \Vert A\Vert_F^2}$, where $\Vert A\Vert_F^2 := \sum_{1\leq i_1,\ldots,i_p\leq n} (a_{i_1,\ldots,i_p})^2$ is the square of the \emph{Frobenius norm}.

\begin{dfn}
A \emph{Gaussian symmetric tensor} $A\in S^p(\mathbb{R}^n)$ is a random tensor given by the density
$ k_{n,p}\,e^{-\frac{1}{2} \Vert A\Vert_F^2}$, where $k_{n,p}=(\int_{ S^p(\HR^n)}\,e^{-\frac{1}{2} \Vert A\Vert_F^2}\,\d A)^{-1}$ (Draisma and Horobet \cite{draisma-horobet} call the distribution given by this density the \emph{Gaussian distribution with respect to the Bombieri-norm}).
\end{dfn}

The complex number $\lambda \in \mathbb{C}$ is called
an \emph{eigenvalue} of the tensor $A\in \HR^{n\times \cdots\times n}$, if there exists a vector $v=(v_1,\ldots,v_n)\in \mathbb{C}^n$, such that the following equation holds:
\begin{equation}\label{def_eigenpairs}
Av^{p-1} := \bigg( \sum_{1\leq i_2,\ldots,i_p\leq n} \, a_{j,i_2,\ldots,i_p}\,v_{i_2}\cdots v_{i_p}\bigg)_{1\leq j\leq n} = \lambda v, \; \text{ and } v^Tv=1.
\end{equation}
The pair $(v,\lambda)$ is called an \emph{eigenpair} in this case.  For $p=2$, equation \cref{def_eigenpairs} is the defining equation of matrix eigenpairs.
The condition $v^Tv=1$ serves for selecting a point from each \emph{class} of eigenpairs: if $Av^{p-1} =\lambda v$, then also $A(tv)^{p-1} = (t^{p-2}\lambda)(t v)$. In particular, if $p$ is odd and if $\lambda$ is an eigenvalue of $A\in (R^n)^{\otimes p}$, then also $-\lambda$ is an eigenvlaue of $A$. To take into account this reflection property we make the following definition.
\begin{dfn}
If $p$ is odd we define the \emph{number of eigenvalues} of $A\in (\HR^n)^{\otimes p}$, to be the the number of solutions of \cref{def_eigenpairs} divided by two. For even $p$ even we define \emph{number of eigenvalues} of $A\in(\HR^n)^{\otimes p}$, to be the the number of solutions of \cref{def_eigenpairs}.
\end{dfn} For this definition Cartwright and Sturmfels \cite{sturmfels-cartwright} show that the number of complex eigenpairs for the generic tensor is $D(n,p):=\sum_{i=0}^{n-1}(p-1)^i$. In the following we use the notation
$$E(n,p):= \mean_{A \in S^p(\HR^n) \text{ Gaussian symmetric}} \text{ number of real eigenvalues of } A.$$
In \cref{cor} below we give an exact formula for $E(n,p)$. This complements our result from~\cite{expected_Z_eigenvalues}, where have given an exact formula for
$$E_\text{non-sym}(n,p):= \mean_{A \in (\HR^n)^{\otimes p} \text{ Gaussian}} \text{ number of real eigenvalues of } A.$$
This formula is given in terms of \emph{Gauss' hypergeometric function} $F(a,b,c,x)$ and the \emph{Gamma function} $\Gamma(n)$ (see \cref{def_hypergeom} and \cref{def_gamma} for their definitions):
$$E_\text{non-sym}(n,p) = \frac{2^{n-1}\,\sqrt{p-1}^{\,n}\,\Gamma(n-\tfrac{1}{2})}{\sqrt{\pi}\,p^{n-\frac{1}{2}}\,\Gamma(n)}\, \left(2(n-1) \, F(1,n-\tfrac{1}{2}, \tfrac{3}{2}, \tfrac{p-2}{p}) + F(1,n-\tfrac{1}{2}, \tfrac{n+1}{2}, \tfrac{1}{p}) \right).$$
Furthermore, we have shown the following asymptotic formulas:
$$\lim_{n\to \infty} \frac{E_\text{non-sym}(n,p)}{\sqrt{D(n,p)}} = \lim_{p\to \infty} \frac{E_\text{non-sym}(n,p)}{\sqrt{D(n,p)}} = 1.$$
Auffinger et.\ al.\ provide in \cite[Theorem 2.17]{AntonioAuffinger2013} the following formula:
  \begin{equation}\label{limit1}\lim_{n\to\infty} \frac{E(n,p)}{\sqrt{D(n,p)}} = \sqrt{\frac{8n}{\pi}} \,  (1+o(1)).
	\end{equation}
Comparing this with~\cref{limit1} it is fair to say that:
\begin{quote} \emph{For fixed $p$ and large $n$, and on the average, a real symmetric tensor has more eigenvalues than a real general tensor.}
\end{quote}
However, the point of view from Auffinger et.\ al.\ is not eigenvalues of tensors, but critical points of the polynomial $Ax^p := \sum_{1\leq i_1,i_2,\ldots,i_p\leq n} \, a_{i_1,i_2,\ldots,i_p}\,x_{i_2}\cdots x_{i_p}$ restricted to the unit sphere. If~$A\in S^p(\HR^n)$ is Gaussian symmetric, $Ax^p$ is called a \emph{Kostlan polynomial}. Indeed, every solution of \cref{def_eigenpairs}  corresponds to a critical point of $Ax^p$ on the unit sphere and vice versa. This point of view is also taken by Fyorodov, Lerario and Lundberg \cite{connected_comp}, who argue that each connected component of the zero set of a polynomial contains at least one critical point. Consequently, $E(n,d)$ yields information about the average topology of the zero set of a Kostlan polynomial.

Using the formula from \cref{cor} we get the following asymptotic formulas for large $p$:
	\begin{align}\label{asymptotic}
\lim_{p\to \infty} \frac{E(2m+1,p)}{\sqrt{D(2m+1,p)}} &=\frac{\sqrt{3\pi}}{\prod_{i=1}^{2m+1}\Gamma\left(\tfrac{i}{2}\right)}\sum_{1\leq i,j\leq m} \det(\Gamma_1^{i,j}) \, g_{i-1,j}, \;\text{ and }\\
\lim_{p\to \infty} \frac{E(2m,p)}{\sqrt{D(2m,p)}} &= \frac{\sqrt{3}}{\prod_{i=1}^{2m}\Gamma\left(\tfrac{i}{2}\right)}\sum_{0\leq i,j\leq m-1}  \det(\Gamma_2^{i,j})\,g_{i,j},\nonumber
	\end{align}
where $\Gamma_1^{i,j}$ and $\Gamma_2^{i,j}$ denote the following matrices depending on $m:=\lceil \tfrac{n}{2}\rceil$:
  \begin{equation}\label{gamma_matrices}
  \Gamma_1^{i,j}:=\left[\Gamma\left(r+s-\tfrac{1}{2}\right)\right]_{\substack{1\leq r\leq m,r\neq i\\1\leq s\leq m, s\neq j}} \quad \text{ and } \quad \Gamma_2^{i,j}=\left[\Gamma\left(r+s+\tfrac{1}{2}\right)\right]_{\substack{0\leq r\leq m-1, r\neq i\\0\leq s\leq m-1,s\neq j}}.
  \end{equation}
and where
\begin{equation}\label{def_g}
g_{i,j} = \begin{cases}
\displaystyle\frac{\sqrt{\pi}(2i+1)!}{(-1)^{i}\,3\, 2^{2i}\,i!}\, \;F\left(-i,\tfrac{1}{2},\tfrac{3}{2},\tfrac{-1}{3}\right) - \frac{   \Gamma\left(i+\frac{1}{2}\right)}{2\big(-\frac{3}{4}\big)^{i+1}}, &\text{ if } j = 0 \\[0.2cm]
\displaystyle\frac{ \,\Gamma\left(i+j+\frac{1}{2}\right)}{\tfrac{(1-2i-2j)}{(1-2i+2j)}\,\left(-\tfrac{3}{4}\right)^{i+j} } \;F\left(-2i,-2j+1,\tfrac{3}{2}-i-j,\tfrac{3}{4}\right), &\text{ if } j >0. \end{cases}
\end{equation}
Note that $g_{i,j}=\lim_{p\to\infty} g_{i,j}(p)$, where $g_{i,j}(p)$ are the rational polynomials from \cref{cor}.

At first glance the formulas in \cref{asymptotic} don't provide much insight, and unfortunately we don't know how to simplify them any further, nor do we know how to compute the leading order like in \cref{limit1}. But we have the following interesting corollary:
\begin{cor}\label{cor2} For fixed $n$ we have $\lim_{p\to \infty} \frac{E(n,p)}{\sqrt{D(n,p)}} \in \HQ(\sqrt{3}).$
\end{cor}
\begin{proof}
For all $k\in\HN$ we have $\Gamma(k+\tfrac{1}{2})=q\sqrt{\pi}$ for some $q\in\HQ$; see, e.g., \cite[43:4:3]{atlas}. In both formulas in \cref{asymptotic} there are as many $\sqrt{\pi}$s in the numerator as there are in the denominator.
\end{proof}

\begin{figure}[t]
\begin{center}
\includegraphics[width = 0.8\textwidth]{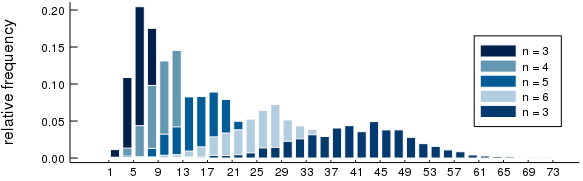}
\end{center}
\vspace{-1em}
\caption{The histogram shows the output of the following experiment: for $3\leq n\leq 7$ we sampled $10^4$ Gaussian symmetric tensors in $S^3(\HR^n)$ and computed the number of real eigenvalues using \cite{BT}. \cref{cor} predicts $E(3,3)\approx 5.28$,  $E(3,4)\approx 9.4$, $E(3,5)\approx15.75$,  $E(3,6)\approx 25.44$ and $E(3,7)\approx 40.1$.}
\label{fig1}
\end{figure}

Here is our main theorem. We give a proof in \cref{sec:proof2}.
\begin{thm}[An exact formula for $E(n,p)$]\label{cor}
We define the rational polynomial functions
\begin{equation*}
g_{i,j}(p) = \begin{cases}
\displaystyle\frac{\sqrt{\pi}(2i+1)!}{(-1)^{i} 2^{2i}\,i!}\,\frac{(p-2)^ip}{(p-1)^{i}(3p-2)} \;F\left(-i,\tfrac{1}{2},\tfrac{3}{2},\tfrac{-p^2}{(3p-2)(p-2)}\right) - \frac{   \Gamma\left(i+\frac{1}{2}\right)}{2\big(-\frac{3p-2}{4(p-1)}\big)^{i+1}}, &\text{ if } j = 0 \\[0.2cm]
\displaystyle\frac{ \,\Gamma\left(i+j+\frac{1}{2}\right)}{\tfrac{(1-2i-2j)}{(1-2i+2j)}\,\left(-\tfrac{3p-2}{4(p-1)}\right)^{i+j} } \;F\left(-2i,-2j+1,\tfrac{3}{2}-i-j,\tfrac{3p-2}{4(p-1)}\right), &\text{ if } j >0 \end{cases}
\end{equation*}
Then, for all $p\geq 3$ we have
\begin{align*}
E(2m+1,p) &=1+\frac{\sqrt{\pi}\,(p-1)^{m-1}\sqrt{(p-1)(3p-2)}}{\prod_{i=1}^{2m+1}\Gamma\left(\tfrac{i}{2}\right)}\sum_{1\leq i,j\leq m} \det(\Gamma_1^{i,j}) \, g_{i-1,j}(p), \text{ and }\\[0.2cm]
E(2m,p) &= \frac{(p-1)^{m-1}\,\sqrt{3p-2}}{\prod_{i=1}^{2m}\Gamma\left(\tfrac{i}{2}\right)}\sum_{0\leq i,j\leq m-1}  \det(\Gamma_2^{i,j})\,g_{i,j}(p).
\end{align*}
\end{thm}
Using essentially the same argument as for \cref{cor2} we can prove the following.
\begin{cor}\label{cor3} We have
 $E(2m+1,p)\in\HQ\big(\sqrt{(p-1)(3p-2)}\,\big)$ and $E(2m,p)\in\HQ\big(\sqrt{3p-2}\,\big).$
\end{cor}

\begin{table}
	\begin{center}
    \begin{tabular}{l|lc}
    \hline
    $n$ & $E(n,p)$ & $\lim\limits_{p\to\infty} \frac{E(n,p)}{\sqrt{D(n,p)}}$\\ \hline\\[-0.2cm]
 	2 & $\sqrt{3 \, p - 2}
$ &  $\sqrt{3}$\\[0.1cm]
 	3 & $1+\frac{4 \, {\left(p - 1\right)}^{\frac{3}{2}}}{\sqrt{3 \, p - 2}}$ & $\frac{4}{\sqrt{3}}$\\[0.1cm]
 	4 & $\frac{29 \, p^{3} - 63 \, p^{2} + 48 \, p - 12}{2 \, {\left(3 \, p -
2\right)}^{\frac{3}{2}}}$  & $\frac{29}{6\sqrt{3}}$\\[0.1cm]
 	5 & $1+\frac{2 \, {\left(5 \, p - 2\right)}^{2} {\left(p - 1\right)}^{\frac{5}{2}}}{{\left(3 \, p - 2\right)}^{\frac{5}{2}}}$ & $\frac{50}{9\sqrt{3}}$\\[0.1cm]
 	6 & $\frac{1339 \, p^{6} - 5946 \, p^{5} + 11175 \, p^{4} - 11240 \, p^{3} +
6360 \, p^{2} - 1920 \, p + 240}{8 \, {\left(3 \, p -
2\right)}^{\frac{7}{2}}}
  $ & $\frac{1339}{216\sqrt{3}}$\\[0.1cm]
 	7 & $1+\frac{{\left(1099 \, p^{4} - 2296 \, p^{3} + 2184 \, p^{2} - 992 \, p +
176\right)} {\left(p - 1\right)}^{\frac{7}{2}}}{2 \, {\left(3 \, p -
2\right)}^{\frac{9}{2}}}$ & $\frac{1099}{162\sqrt{3}}$\\[0.1cm]
    \hline
    \end{tabular}
		\end{center}
		\vspace{-1em}
		\caption{Formulas for $E(n,p)$ and $\lim_{p\to \infty}\tfrac{E(n,p)}{\sqrt{D(n,p)}}$ for $2\leq n \leq 7$. We used \textsc{sage} \cite{sage} to derive the middle column. The source code of the scripts are given in \cref{appendix}.\label{table0}}
\end{table}

\subsection{Random matrix theory}\label{sec:RMT}
Gaussian tensors of order $p=2$ are better known under another name: the \emph{Gaussian Orthogonal Ensemble}. If $A\in S^2(\HR^n)$ is a matrix from this ensemble, we write $A\sim \mathrm{GOE}(n)$. For $u\in \HR$, let us denote
		\begin{equation}\label{def_I_J}
		\cI_n(u):=\mean\limits_{A\sim \mathrm{GOE}(n)} \lvert \det(A-uI_n)\rvert,\quad\text{and}\quad \cJ_n(u):=\mean\limits_{A\sim \mathrm{GOE}(n)}\det(A-uI_n),
		\end{equation}
where $I_n$ is the $n\times n$ identity matrix. The proof of \cref{cor} is based on the computation of~$\cI_n(u)$.
We remark that $\lvert \cJ_n(u)\rvert \leq \cI_n(u)$ by the triangle inequality. A computation of $\cJ_n(u)$ can be found in \cite[Section 22]{mehta} and the ideas in this paper are inspired by the computations in this reference. The following result, which is new to our best knowledge, shows that $\cI_n(u)$ can be expressend in terms of~$\cJ_n(u)$ and a collection of \emph{Hermite polynomials}.

\begin{thm}[The expected absolute value of the determinant of a GOE matrix]\label{thm}
Let $u\in\HR$ be fixed. Define the functions $P_{-1}(x),P_0(x),P_1,(x),P_2(x),\ldots$ via
	\[P_k(x)=\begin{cases} -e^{\frac{x^2}{2}}\int_{t=-\infty}^x e^{-\tfrac{t^2}{2}}\,\d t,& \text{if } k=-1\\
	H_{e_k}(x),&\text{if } k=0,1,2,\ldots.\end{cases}\]
where $H_{e_k}(x)$ is the $k$-th (probabilist's) Hermite polynomial; see \cref{my_polynomials}.
Then, we have
\begin{align*}
	\cI_{2m}(u)&=\cJ_{2m}(u)+\frac{\sqrt{2\pi} \,e^{-\tfrac{u^2}{2}}}{\prod_{i=1}^{2m}\Gamma\left(\tfrac{i}{2}\right)}\,\sum_{1\leq i,j \leq m} \det(\Gamma_1^{i,j}) \; \det\begin{bmatrix} P_{2i-1}(u) & P_{2j}(u) \\ P_{2i-2}(u) & P_{2j-1}(u) \end{bmatrix}, \text{ and }\\
\cI_{2m-1}(u)&=\cJ_{2m-1}(u)  +  \frac{\sqrt{2}\,e^{-\tfrac{u^2}{2}}}{\prod_{i=1}^{2m-1}\Gamma\left(\tfrac{i}{2}\right)}\,\sum_{0\leq i,j\leq m-1}  \det(\Gamma_2^{i,j})\,  \det\begin{bmatrix} P_{2j}(u) &   P_{2i+1}(u)\\   P_{2j-1}(u) & P_{2i}(u)\end{bmatrix}.
\end{align*}
Here, $\Gamma_1^{i,j}$ and $\Gamma_2^{i,j}$ are the matrices from \cref{gamma_matrices}.
\end{thm}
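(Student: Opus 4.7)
Following the strategy used by Mehta in~\cite[Sec.~22]{mehta} to compute $\cJ_n(u)$, my plan is to reduce $\cI_n(u)$ to $\cJ_n(u)$ plus a correction arising from a rank-$2$ antisymmetric perturbation of a Pfaffian, and then to evaluate that correction in the probabilist's Hermite polynomial basis. Starting from the joint eigenvalue density for $A\sim\mathrm{GOE}(n;u,1)$, I would write
\[
\cI_n(u)=\frac{1}{Z_n}\int_{\HR^n}\prod_i|\lambda_i|\prod_{i<j}|\lambda_i-\lambda_j|\,e^{-\tfrac12\sum_i(\lambda_i+u)^2}d\lambda,
\]
and similarly for $\cJ_n(u)$ with $\prod_i\lambda_i$ in place of $\prod_i|\lambda_i|$, where $Z_n$ is the standard normalization constant. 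After the shift $\mu_i:=\lambda_i+u$, de Bruijn's identity expresses these as Pfaffians $\cI_n(u)=\tfrac{n!}{Z_n}\mathrm{Pf}(M^I)$, $\cJ_n(u)=\tfrac{n!}{Z_n}\mathrm{Pf}(M^J)$ for $n=2m$ even; the odd case $n=2m-1$ adds a bordering row and column coming from $\int x^{j-1}(x-u)e^{-x^2/2}dx$.

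The key observation is that the entries of $M^I$ and $M^J$ differ only through the identity $|s||t|-st=-2st\,\mathbbm{1}_{\{st<0\}}$, so the difference is supported on $\{(x-u)(y-u)<0\}$. A short computation then gives
\[
M^I-M^J=2(ab^T-ba^T),\ \ a_j:=\!\int_u^\infty\!x^{j-1}(x-u)e^{-x^2/2}dx,\ \ b_j:=\!\int_{-\infty}^u\!x^{j-1}(x-u)e^{-x^2/2}dx,
\]
that is, $M^I$ is a rank-$2$ antisymmetric perturbation of $M^J$. For any invertible antisymmetric $N$ and vectors $a,b$ one checks, using $a^TN^{-1}a=b^TN^{-1}b=0$, that $\det(N+ab^T-ba^T)=\det(N)(1+b^TN^{-1}a)^2$; taking the square root and combining with the Pfaffian cofactor identity $\mathrm{Pf}(N)(N^{-1})_{jk}=(-1)^{j+k+1}\mathrm{Pf}(N^{\widehat j\,\widehat k})$ yields the closed expansion
\[
\mathrm{Pf}(M^I)-\mathrm{Pf}(M^J)=2\sum_{j<k}(a_jb_k-a_kb_j)(-1)^{j+k}\mathrm{Pf}\!\bigl((M^J)^{\widehat j\,\widehat k}\bigr).
\]

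To match this expansion with the stated formula I would change basis from the monomials $x^{j-1}$ to the Hermite polynomials $H_{e_{j-1}}$, a unipotent triangular change that leaves the Pfaffian invariant. Because $H_{e_k}(x)e^{-x^2/2}=(-1)^k(e^{-x^2/2})^{(k)}$, each half-line tail integral in the Hermite-basis versions of $a_j, b_j$ collapses, by repeated integration by parts, to boundary values $H_{e_{k-1}}(u)e^{-u^2/2}=P_{k-1}(u)e^{-u^2/2}$ plus a single non-elementary remainder $\int_{-\infty}^u e^{-t^2/2}dt=-P_{-1}(u)e^{-u^2/2}$; this is precisely the role played by the extended polynomial $P_{-1}$ in the theorem. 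In the same basis the bulk two-point integrals defining $M^J$ factor into Gamma-function moments $\int_{\HR}x^re^{-x^2/2}dx$, producing the matrices $\Gamma_1,\Gamma_2$: the full Pfaffian $\mathrm{Pf}(M^J)$ recovers Mehta's formula for $\cJ_n(u)$ together with the normalizer $\prod_{i=1}^n\Gamma(i/2)^{-1}$, while the Pfaffian minors $\mathrm{Pf}((M^J)^{\widehat j\,\widehat k})$ reduce to $\det(\Gamma_1^{i,j})$ or $\det(\Gamma_2^{i,j})$ after the parity-dependent re-indexing of $(j,k)$ as $(2i-1,2i)\leftrightarrow(2j-1,2j)$ (even case) or $(2i-1,2i)\leftrightarrow(2j,2j+1)$ (odd case).

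The main obstacle will be precisely this last bookkeeping: collecting the boundary values produced by the integrations by parts into the $2\times 2$ determinants $\det\begin{pmatrix}P_{2i-1}(u)&P_{2j}(u)\\P_{2i-2}(u)&P_{2j-1}(u)\end{pmatrix}$ in the even case and $\det\begin{pmatrix}P_{2i}(u)&P_{2j+1}(u)\\P_{2i-1}(u)&P_{2j}(u)\end{pmatrix}$ in the odd case, and tracking all signs and normalizing constants so that the prefactors $\sqrt{2\pi}\,e^{-u^2/2}$ and $\sqrt{2}\,e^{-u^2/2}$ emerge correctly. The discrepancy of a factor of $\sqrt{\pi}$ between the two prefactors reflects the additional Gaussian integral $\sqrt{2\pi}$ contributed by de Bruijn's identity in the even-dimensional case but replaced, in the odd case, by the bordered integral $\int x^{j-1}(x-u)e^{-x^2/2}dx$.
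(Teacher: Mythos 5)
Your route is genuinely different from the paper's. The paper decomposes $\cI_n(u)=(\cI_n(u)+\cJ_n(u))-\cJ_n(u)$, represents $\Delta(\lambda)\prod_i(\lambda_i-u)$ as an $(n+1)\times(n+1)$ determinant with a \emph{dedicated column} of values $P_k(u)$, applies Mehta's integration over alternate variables to produce the antiderivatives $G_k$, and then splits the resulting determinant by multilinearity: one piece recombines into $2\cJ_n(u)$, and the leftover piece is evaluated by Laplace expansion together with the orthogonality relations $\langle G_k,P_\ell\rangle=(-1)^{i+j}\Gamma(i+j-\tfrac12)$ (\cref{orthogonal_rel}), which is where $\det(\Gamma_1^{i,j})$, $\det(\Gamma_2^{i,j})$ and the $2\times 2$ Hermite determinants come from. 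Your plan instead goes through de Bruijn's identity and isolates $\cI_n(u)-\cJ_n(u)$ via $|s||t|-st=-2st\,\mathbbm{1}_{\{st<0\}}$ as a rank-two antisymmetric perturbation of the Pfaffian kernel. That identity is correct, the region $\{(x-u)(y-u)<0,\ x<y\}=\{x<u<y\}$ does give $M^I-M^J=2(ab^T-ba^T)$ with your $a_j,b_j$, and the terminating Pfaffian expansion for a rank-two perturbation (via $\det(N+ab^T-ba^T)=\det(N)(1+b^TN^{-1}a)^2$ and the Pfaffian cofactor identity) is sound. This is an attractive alternative organization of the same Mehta-style machinery.

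The concrete gap is in the last step, where you assert that the minors $\mathrm{Pf}\bigl((M^J)^{\widehat{\jmath}\,\widehat{k}}\bigr)$ ``reduce to $\det(\Gamma_1^{i,j})$ or $\det(\Gamma_2^{i,j})$.'' In your setup the basis functions are $\phi_j(x)=H_{e_{j-1}}(x)(x-u)e^{-x^2/2}$ (or any unipotent recombination of $x^{j-1}(x-u)e^{-x^2/2}$), so \emph{every} entry of $M^J$ depends on $u$ --- as it must, since $\mathrm{Pf}(M^J)\propto\cJ_n(u)$ is $u$-dependent. Hence the minors are not the $u$-independent Gamma determinants of the theorem, and the claimed factorization of the correction into $(u\text{-independent})\times(2\times2\text{ determinant in }P_k(u))$ does not follow as stated. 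To close this you need an extra structural step: for instance, pass to the basis $\{P_k(x)-P_k(u)\}_{k=1}^{n}$ of polynomials vanishing at $u$, which exhibits $M^J$ itself as a second rank-two perturbation, $M^J=N+P(u)d^T-dP(u)^T$, of the $u$-independent checkerboard matrix $N_{jk}=\iint_{x<y}(P_j(x)P_k(y)-P_k(x)P_j(y))e^{-(x^2+y^2)/2}\,\d x\,\d y$ whose nonzero entries are exactly the values $\pm\Gamma(i+j-\tfrac12)$; you must then expand both rank-two perturbations jointly and show the cross terms recombine into the stated $2\times2$ determinants. (The paper sidesteps this entirely by carrying $u$ in a single extra column, so that the bulk of the matrix is $u$-free from the start.) Your reduction of the tail integrals is also slightly off as written: using the three-term recurrence one finds $\int_u^\infty H_{e_{j-1}}(x)(x-u)e^{-x^2/2}\,\d x=e^{-u^2/2}P_{j-3}(u)$ plus $\sqrt{2\pi}$-corrections for small $j$, i.e.\ a \emph{single} $P$-value per index rather than the pair $(P_{2i-1},P_{2i-2})$ appearing in the theorem; the missing partner must come out of the $u$-dependence of the minors, which is exactly the part you have not supplied. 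Until that bookkeeping is done the proposal is a credible plan rather than a proof.
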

\begin{rem}
The computation of $E(n,p)$ is based on the formula \cref{100} by Draisma and Horobet, which involves the expectation $\mean_{u\sim N(0,\sigma^2)}\cI_n(u)$ for $\sigma^2 = \tfrac{p}{2(p-1)}$.
In the recent article \cite{repeated_eigenvalues}, together with Khazhgali Kozhasov and Antonio Lerario, we have computed the volume of the set of matrices with repeated eigenvalues, and this computation is based on $\mean_{u\sim N(0,1/2)}\cI_n(u)^2$.
\end{rem}
\subsection{Organization of the article}
In the next section we give some preliminary material. Then, in \cref{sec:proof2} we prove \cref{cor}. In \cref{sec:integrals} we compute several integrals that are used in the proof of \cref{thm}, which we prove in \cref{sec:111}.
\subsection{Acknowledgements}
The author wants to thank Antonio Lerario for helpful remarks on the structure of this article.
\section{Preliminaries}
We first fix notation: in what follows $n\geq 2$ is always a positive integer and $m:=\lceil \tfrac{n}{2}\rceil$; that is, $n=2m$, if $n$ is even, and $n=2m-1$, if $n$ is odd. The symbols $a,b,c,x,y,\lambda$ will denote variables or real numbers. By capital calligraphic letters $\cA,\cM,\cK,\cL$ we denote matrices.  The symbols $G$ and $P$ are reserved for the functions defined in \cref{my_polynomials} and $M$ and $F$ denote the two hypergeometric functions defined in \cref{def_M} and \cref{def_hypergeom} below. The symbol $\langle\,,\,\rangle$ always denotes the inner product defined in \cref{inner_product}.
\subsection{Special functions}
Throughout the article we use a collection of special function. We present them in this subsection. The \emph{Pochhammer polynomials} \cite[18:3:1]{atlas} are defined by $(x)_n:=x(x+1)\cdots (x+n-1),$ where $n$ is a positive integer. If $n=0$, the definition is $(x)_0:=1$. \emph{Kummer's confluent hypergeometric function} \cite[Sec. 47]{atlas} is defined as
\begin{equation}\label{def_M}
M(a,c, x) := \sum_{k=0}^\infty \frac{(a)_k}{(c)_k}\, \frac{x^ k}{k!},
\end{equation}
and \emph{Gauss' hypergeometric function} \cite[Sec. 60]{atlas} is defined as
\begin{equation}\label{def_hypergeom}
F(a,b,c,x) := \sum_{k=0}^\infty \frac{(a)_k\,(b)_k }{(c)_k}\, \frac{x^ k}{k!},
\end{equation}
where $a,b,c\in\HR$, $c\neq 0,-1,-2,\ldots$. Generally,  neither $M(a,c,x)$ nor $F(a,b,c, x)$ converges for all $x$. But if either of the numeratorial parameters $a,b$ is a non-positive integer, both $M(a,c,x)$ and $F(a,b,c, x)$ reduce to polynomials and hence are defined for all $x\in\HR$ (and this is the only case we will meet throughout the paper).
\begin{rem}
Other common notations are $M(a,c,x)=~_1F_1(a;c;x)$ and $F(a,b,c,x)=~_1F_2(a,b;c;x)$. This is due to the fact that both $M(a,c,x)$ and $F(a,b,c,x)$ are special cases of the \emph{general hypergeometric function}  $~_qF_p(a_1,\ldots,a_q;c_1,\ldots,c_p;x):=\sum\limits_{k=0}^\infty \frac{\prod_{i=1}^p(a_i)_k}{\prod_{i=1}^q(c_i)_k}\, \frac{x^ k}{k!}$.
\end{rem}
The following will be useful later.
\begin{lemma}\label{lemma_hypergeom}
Let $a,b$ be non-positive integers and $c\neq 0,-1,-2,\ldots$. Then
	\[F(a,b+1,c,x)-F(a+1,b,c,x)= \frac{(a-b)x}{c} F(a+1,b+1,c+1,x)\]
\end{lemma}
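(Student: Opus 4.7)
The plan is to prove the identity by matching coefficients of $x^k$ on both sides, using the power-series definition \cref{def_hypergeom}. Since $a$ and $b$ are non-positive integers, every $F$ appearing in the statement is a polynomial, so convergence is never an issue and the manipulation is purely formal.

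The key algebraic step is the Pochhammer identity
\[
(a)_k (b+1)_k - (a+1)_k (b)_k \;=\; k(a-b)\,(a+1)_{k-1}(b+1)_{k-1}, \qquad k\geq 1,
\]
with both sides vanishing at $k=0$. To derive it, I would first assume $a,b\neq 0$ and factor $(a)_k(b)_k$ out of the left side, using $(a+1)_k/(a)_k = (a+k)/a$ and $(b+1)_k/(b)_k = (b+k)/b$; the bracket simplifies to $k(a-b)/(ab)$, and then $(a)_k/a = (a+1)_{k-1}$ and $(b)_k/b = (b+1)_{k-1}$ (valid for $k\geq 1$) recover the right-hand side. Since both sides are polynomials in $a,b$, this identity extends by polynomial continuity to the edge cases $a=0$ or $b=0$ that are allowed here.

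With the Pochhammer identity in hand, I would substitute termwise into the difference of the two series; the $k=0$ term cancels, leaving
\[
F(a,b+1,c,x)-F(a+1,b,c,x)=\sum_{k=1}^{\infty}\frac{k(a-b)\,(a+1)_{k-1}(b+1)_{k-1}}{(c)_k}\,\frac{x^k}{k!}.
\]
Writing $(c)_k = c\,(c+1)_{k-1}$, cancelling the factor $k$ against $k!$, and shifting the index by $j = k-1$ rearranges the sum into $\tfrac{(a-b)x}{c}\sum_{j\geq 0}\tfrac{(a+1)_j(b+1)_j}{(c+1)_j}\tfrac{x^j}{j!}$, which is exactly $\tfrac{(a-b)x}{c}F(a+1,b+1,c+1,x)$ by \cref{def_hypergeom}.

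The only subtle point is the Pochhammer identity and the fact that $a$ or $b$ may equal zero (the formulas $(a)_k/a = (a+1)_{k-1}$ need a momentary detour through polynomial continuity); everything else is routine index shifting. This is essentially one of the standard contiguous relations for Gauss' hypergeometric function, so no deeper machinery is needed.
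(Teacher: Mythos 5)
Your proposal is correct and follows essentially the same route as the paper's proof: cancel the constant terms, reduce to the Pochhammer identity $(a)_k(b+1)_k-(a+1)_k(b)_k=k(a-b)(a+1)_{k-1}(b+1)_{k-1}$, and reindex using $(c)_k=c(c+1)_{k-1}$. The only difference is that you derive the Pochhammer identity by hand (with the polynomial-continuity remark for $a=0$ or $b=0$), whereas the paper cites it from a reference table.
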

\begin{proof}
Since $a$ and $b$ are non-negative integers, $F(a,b+1,c,x)$ and $F(a+1,b,c,x)$ are polynomials, whose constant term is equal to $1$. Therefore,
	\begin{equation}\label{401}
	F(a,b+1,c,x)-F(a+1,b,c,x)= \sum_{k=1}^\infty \frac{(a)_k(b+1)_k - (a+1)_k(b)_k}{(c)_k}\,\frac{x^k}{k!}.
	\end{equation}
 We have $
	(a)_k(b+1)_k - (b)_k(a+1)_k \stackrel{\text{\cite[18:5:6]{atlas}}}{=} (a)_k(b)_k(1+\tfrac{k}{b}) - (a)_k(b)_k(1+\tfrac{k}{a})= (a)_k(b)_k k \;\frac{a-b}{ab}.$
According to \cite[18:5:7]{atlas} the latter is equal to $(a+1)_{k-1}(b+1)_{k-1} k (a-b)$ and, moreover, $(c)_k=c(c+1)_{k-1}$. The claim follows when plugging this into \cref{401}.
\end{proof}
 For $x\geq 0$ the \emph{Gamma function} \cite[Sec. 43]{atlas} is defined as
 \begin{equation}\label{def_gamma}
 \Gamma(x):=\int_0^\infty t^{x-1} e^{-t} \d t.
 \end{equation}
The \emph{cumulative distribution function of the normal distribution} \cite[40:14:2]{atlas} and the \emph{error function} \cite[40:3:2]{atlas} are respectively defined as
	\begin{equation}\label{Phi}
	\Phi(x):=\frac{1}{\sqrt{2\pi}} \, \int_{-\infty}^\infty e^{-\frac{t^2}{2}}\, \d t,\quad \text{and}\quad \mathrm{erf}(x):= \frac{2}{\sqrt{\pi}} \; \int_0^x \exp(-t^2) \, \d t.
	\end{equation}
The error function and $\Phi(x)$ are related by the following equation	\cite[40:14:2]{atlas}
 	\begin{equation}
	2\Phi(x)=1+\mathrm{erf}\left(\frac{x}{\sqrt{2}}\right).\label{error_function_rel}
	\end{equation}
The error function and the Kummer's hypergeometric function are related by
		\begin{equation}
\mathrm{erf}(x)=\frac{2x}{\sqrt{\pi}}\,M\left(\tfrac{1}{2},\tfrac{3}{2},-x^2\right);  \label{error_function_rel2}
	\end{equation}
see \cite[13.6.19]{abramowitz}.
\subsection{Hermite polynomials}\label{sec:hermite}
Hermite polynomials are a family of polynomials $H_0(x),H_1(x),\ldots$ that are defined as
\begin{equation}
\label{hermite0}
H_n(x):=(-1)^n e^{x^2} \frac{\d^n}{\d x^n} e^{-x^2},
\end{equation}
see \cite[24:3:2]{atlas}.
An alternative Hermite function is defined by
	\begin{equation}\label{hermite}
	H_{e_n}(x):= (-1)^n e^{\tfrac{x^2}{2}} \frac{\d^n}{\d x^n} e^{-\tfrac{x^2}{2}}.
	\end{equation}
The two definitions are related by the following equality \cite[24:1:1]{atlas}
	\begin{equation}\label{hermite_relation}
	H_{e_n}(x)=\frac{1}{\sqrt{2}^{\, n}} H_n\left(\frac{x}{\sqrt{2}}\right).
	\end{equation}
By \cite[24:5:1]{atlas} we have that
	\begin{equation}\label{negative_hermite}
	H_k(-z)=(-1)^k H_k(z)\quad \text{and}\quad H_{e_k}(-z)=(-1)^k H_{e_k}(z)
	\end{equation}
	\begin{rem}
In the literature, the polynomials $H_n(x)$ are sometimes called the \emph{physicists' Hermite polynomials} and the $H_{e_n}(x)$ are sometimes called the \emph{probabilists' Hermite polynomials}. We will refer to both simply as \emph{Hermite polynomials} and distinguish them by using the respective symbols.
	\end{rem}
Hermite polynomials can be expressed in terms of Kummer's confluent hypergeometric function from \cref{def_M}:
\begin{align}\label{hermite_and_kummer}
H_{2k+1}(x)&=(-1)^k \frac{(2k+1)!\,2x}{k!}\,  M(-k,\tfrac{3}{2},x^2), \text{ and }\\
H_{2k}(x)&=(-1)^k \frac{(2k)!}{k!}\,  M(-k,\tfrac{1}{2},x^2);
\end{align}
see \cite[13.6.17 and 13.6.18]{abramowitz}.
\subsection{Orthogonality relations of Hermite polynomials}\label{sec:orth_rel}
The Hermite polynomials satisfy the following orthogonality relations. By \cite[7.374.2]{gradshteyn} we have
	\begin{equation}\label{important1}
	\int_\HR H_{e_m}(x)H_{e_n}(x) e^{-x^2} \d x =
	\begin{cases} (-1)^{ \lfloor\tfrac{m}{2}\rfloor+ \lfloor\tfrac{n}{2}\rfloor}  \;\Gamma\left(\frac{m+n+1}{2}\right), &\text{if $m+n$ is even}\\
	0,&\text{if $m+n$ is odd.} \end{cases},
	\end{equation}
where $\Gamma(x)$ is the Gamma function from \cref{def_gamma}. More generally, by \cite[p. 289, eq. (12)]{vol2}, if $m+n$ is even, we have  for $\alpha>0$, $\alpha^2\neq \tfrac{1}{2}$, that
\begin{equation}\label{important2}
\int_{-\infty}^\infty H_{e_m}(x)H_{e_n}(x) e^{-\alpha^2 x^2} \d x
 = \frac{(1-2\alpha^2)^{\frac{m+n}{2}}\,\Gamma\left(\frac{m+n+1}{2}\right)}{\alpha^{m+n+1}}  \, F\left(-m-n;\tfrac{1-m-n}{2};\tfrac{\alpha^2}{2\alpha^2-1}\right).
\end{equation}
Here $F(a,b,c,x)$ is Gauss' hypergeometric function as defined in \cref{def_hypergeom}. Recall from \cref{Phi} the definition of $\Phi(x)$. In the following we abbreviate
	\begin{equation}\label{my_polynomials}
	P_k(x):=\begin{cases} H_{e_k}(x),& \text{ if } k=0,1,2,\ldots\\
	-\sqrt{2\pi}\,e^{\frac{x^2}{2}}\Phi(x), &\text{ if } k=-1.
	\end{cases}
	\end{equation}
and put
\begin{equation}\label{G}
G_{k}(x):=\int_{-\infty}^x P_k(y)\, e^{-\tfrac{y^2}{2}} \;\d y, \;k=0,1,2,\ldots
\end{equation}
We can express the functions $G_k(x)$ in terms of the $P_k(x)$.
\begin{lemma}\label{lemma2}
 We have
\begin{enumerate}
\item For all $k$: $G_k(x) = - e^{-\tfrac{x^2}{2}}  P_{k-1}(x)$.
\item $G_k(\infty)=\begin{cases} \sqrt{2\pi}, &\text{ if } k=0\\ 0,&\text{ if } k\geq 1\end{cases}$
\end{enumerate}
\end{lemma}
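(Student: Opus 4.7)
The plan is to apply the Rodrigues-type formula for the probabilist's Hermite polynomials and reduce the integral in the definition of $G_k(x)$ to a boundary evaluation. Recall from \cref{hermite} that for $k\geq 1$ we have $H_{e_k}(y)\,e^{-y^2/2} = (-1)^k\,\tfrac{\d^k}{\d y^k}e^{-y^2/2}$, so the integrand in \cref{G} is already a total derivative.

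For part (1), I would split into two cases. If $k\geq 1$, the fundamental theorem of calculus gives
\[
G_k(x) = \int_{-\infty}^x (-1)^k \frac{\d^k}{\d y^k}e^{-y^2/2}\,\d y = (-1)^k\,\frac{\d^{k-1}}{\d x^{k-1}}e^{-x^2/2} - (-1)^k \lim_{y\to-\infty}\frac{\d^{k-1}}{\d y^{k-1}}e^{-y^2/2},
\]
where the limit vanishes since every derivative of $e^{-y^2/2}$ is of the form (polynomial)$\cdot e^{-y^2/2}$, which decays at $\pm\infty$. Applying \cref{hermite} once more to the remaining derivative yields $(-1)^{k-1} H_{e_{k-1}}(x)\,e^{-x^2/2}$, and combining signs produces $G_k(x) = -e^{-x^2/2}\,P_{k-1}(x)$. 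For $k=0$, I compute directly $G_0(x) = \int_{-\infty}^x e^{-y^2/2}\d y = \sqrt{2\pi}\,\Phi(x)$ (interpreting $\Phi$ correctly as the CDF, cf.~\cref{Phi}), and this matches the claimed formula $-e^{-x^2/2}P_{-1}(x)$ by the definition $P_{-1}(x) = -\sqrt{2\pi}\,e^{x^2/2}\Phi(x)$ in \cref{my_polynomials}.

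For part (2), I would simply take $x\to \infty$ in the formula from (1): since $P_{k-1}(x)$ is a polynomial for $k\geq 1$, the factor $e^{-x^2/2}$ forces $G_k(\infty)=0$; and for $k=0$, $G_0(\infty)=\sqrt{2\pi}\,\Phi(\infty)=\sqrt{2\pi}$.

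There is no real obstacle here — the only thing to be careful about is the case $k=0$, where the identity relates the probabilist's Hermite object to the non-polynomial function $P_{-1}$, and checking that the extension $P_{-1}(x) = -\sqrt{2\pi}\,e^{x^2/2}\Phi(x)$ given in \cref{my_polynomials} is precisely what is needed to make the uniform statement $G_k(x)=-e^{-x^2/2}P_{k-1}(x)$ hold for all $k\geq 0$.
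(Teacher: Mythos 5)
Your proof is correct and follows essentially the same route as the paper: both use the Rodrigues-type formula \cref{hermite} to recognize the integrand of \cref{G} as a total derivative and integrate once, then obtain (2) by letting $x\to\infty$. Your explicit separate treatment of the $k=0$ case (checking directly that $G_0(x)=\sqrt{2\pi}\,\Phi(x)$ matches $-e^{-x^2/2}P_{-1}(x)$) is in fact slightly more careful than the paper's uniform formula, which for $k=0$ implicitly relies on exactly that verification.
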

\begin{proof}
Note that (2) is a direct consequence of (1). For (1) let $k\geq 0$ and write
	\[G_k(x)=\int_{y=-\infty}^x P_k(y) e^{-\tfrac{y^2}{2}}\,\d y \stackrel{\text{by \cref{my_polynomials}}}{=} \int_{y=-\infty}^x H_{e_k}(y) e^{-\tfrac{y^2}{2}}\,\d y \stackrel{\text{by \cref{hermite}}}{=} \int_{y=-\infty}^x (-1)^k \frac{\d^k}{\d y^k} e^{-\tfrac{y^2}{2}}\d y.\]
Thus $G_k(x)=(-1)^k \frac{\d^{k-1}}{\d x^{k-1}} e^{-\tfrac{x^2}{2}} = -e^{-\tfrac{x^2}{2}}P_{k-1}(x)$ as desired.
\end{proof}
We now fix the following notation: if two functions $f:\HR\to \HR$ and $g:\HR \to \HR$ satisfy $\int_\HR f(x)^2 e^{-\frac{x^2}{2}} \d x <\infty$ and $\int_\HR g(x)^2 e^{- \frac{x^2}{2}} \d x <\infty$, we define
	\begin{equation}\label{inner_product}
	\langle f(x),g(x) \rangle := \int_\HR f(x) g(x) e^{-\tfrac{x^2}{2}} \d x.
	\end{equation}
The Cauchy-Schwartz inequality implies $\langle f(x),g(x) \rangle<\infty$. The functions $P_k(x)$ and $G_k(x)$ satisfy the following orthogonality relations
\begin{lemma}\label{orthogonal_rel}
For all $k,\ell\geq 0$ with $k\neq \ell$ we have
\begin{enumerate}
\item $\langle G_k(x), P_{\ell}(x)\rangle = - \langle G_\ell(x), P_k(x)\rangle$.\\[0.01cm]
\item
$\langle G_k(x), P_\ell(x)\rangle =
\begin{cases}(-1)^{i+j} \Gamma\left(i+j-\tfrac{1}{2}\right), & \text{if } k=2i-1 \text{ and } \ell=2j\\
	(-1)^{i+j+1} \Gamma\left(i+j-\tfrac{1}{2}\right), & \text{if } k=2i \text{ and } \ell=2j-1\\
0, & \text{if } k+\ell \text{ is even}
\end{cases}
$
 \end{enumerate}
\end{lemma}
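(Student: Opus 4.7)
My plan is to prove (1) by integration by parts and (2) by combining \cref{lemma2} with the orthogonality relation~\cref{important1} for Hermite polynomials, with a sign-bookkeeping step to convert indices.

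For part (1), I would exploit the fact that, by the definition \cref{G}, the function $G_\ell(x)$ is the antiderivative of $P_\ell(x)e^{-x^2/2}$ satisfying $G_\ell(-\infty)=0$; in particular $G_\ell'(x) = P_\ell(x) e^{-x^2/2}$. Rewriting
\[\langle G_k(x), P_\ell(x)\rangle = \int_\HR G_k(x)\,G_\ell'(x)\,\d x\]
and integrating by parts produces a boundary term $[G_k G_\ell]_{-\infty}^{\infty}$ minus $\int_\HR G_k'(x)G_\ell(x)\,\d x = \langle G_\ell(x), P_k(x)\rangle$. The lower boundary contribution is zero because $G_k(-\infty)=G_\ell(-\infty)=0$, and at $+\infty$ the product $G_k(\infty)G_\ell(\infty)$ vanishes whenever at least one of $k,\ell$ is positive, by \cref{lemma2}(2). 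This yields $\langle G_k,P_\ell\rangle = -\langle G_\ell,P_k\rangle$.

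For part (2), I would apply \cref{lemma2}(1) to substitute $G_k(x) = -e^{-x^2/2}P_{k-1}(x)$, which turns the inner product into a Hermite polynomial integral against the weight $e^{-x^2}$:
\[\langle G_k(x), P_\ell(x)\rangle = -\int_\HR P_{k-1}(x)\,P_\ell(x)\,e^{-x^2}\,\d x.\]
For $k \geq 1$, both $P_{k-1}=H_{e_{k-1}}$ and $P_\ell=H_{e_\ell}$ are Hermite polynomials and the integral is given explicitly by \cref{important1}. If $k+\ell$ is even, then $(k-1)+\ell$ is odd and the integral vanishes, yielding the second case. If $k=2i-1$ and $\ell=2j$, then $\lfloor (k-1)/2\rfloor+\lfloor \ell/2\rfloor = (i-1)+j$ and $(k+\ell+1)/2 - 1/2 = i+j-1/2$, so \cref{important1} gives $(-1)^{i+j-1}\Gamma(i+j-\tfrac{1}{2})$; multiplying by the outer sign $-1$ recovers the asserted value. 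The remaining case $k=0$ (where $P_{-1}$ is not a Hermite polynomial) is reduced to the already-handled setting by invoking part~(1): $\langle G_0,P_\ell\rangle = -\langle G_\ell,P_0\rangle$, and since $\ell$ must be even for $k+\ell$ even, the right-hand side was just shown to be zero.

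The only subtle point is the parity/sign bookkeeping when translating between the Hermite-polynomial indexing in \cref{important1} and the $(i,j)$-indexing used in the lemma; the actual analytic input (one integration by parts and one known Hermite orthogonality identity) is light.
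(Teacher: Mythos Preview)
Your treatment of part~(2) is essentially the paper's: both reduce $\langle G_k,P_\ell\rangle$ via \cref{lemma2}(1) to $-\int_\HR H_{e_{k-1}}H_{e_\ell}\,e^{-x^2}\,\d x$ and read off the value from~\cref{important1}, handling the leftover case $k=0$ by first swapping indices through part~(1).

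For part~(1) your route differs. The paper applies Fubini to the double integral and then the substitutions $x\mapsto -x$, $y\mapsto -y$ together with the parity relation~\cref{negative_hermite}, obtaining $\langle G_k,P_\ell\rangle=(-1)^{k+\ell}\langle G_\ell,P_k\rangle$; antisymmetry is immediate when $k+\ell$ is odd, while the even case is closed only after both sides are shown to vanish via the argument for part~(2). Your integration-by-parts argument, rewriting $\langle G_k,P_\ell\rangle=\int_\HR G_k\,G_\ell'\,\d x$ and invoking \cref{lemma2}(2) for the boundary term, is more direct: it yields antisymmetry in one stroke, uniformly in the parity of $k+\ell$, and without any appeal to~\cref{negative_hermite}. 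The paper's computation does produce the slightly sharper identity with the explicit sign $(-1)^{k+\ell}$, but that extra information is never used downstream.

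One small point you already flagged: your boundary term $G_k(\infty)G_\ell(\infty)$ vanishes only when at least one of $k,\ell$ is positive. In fact $\langle G_0,P_0\rangle=\tfrac{1}{2}G_0(\infty)^2=\pi\neq 0$, so the lemma as literally stated fails at $k=\ell=0$; neither your argument nor the paper's covers that case, but the lemma is only ever invoked in the paper with distinct indices, so this is harmless.
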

\begin{proof}
For (1) we have
\begin{align}\label{eqqqs}
\langle G_k(x), P_\ell(x)\rangle &=  \int_\HR G_k(x)P_\ell(x) e^{-\tfrac{x^2}{2}} \d x\\
	\nonumber &= \int_\HR \left(\int_{-\infty}^xP_k(y) e^{-\tfrac{y^2}{2}} \d y \right ) P_\ell(x) e^{-\tfrac{x^2}{2}} \d x\\
	\nonumber &=  \int_\HR \left(\int_{y}^\infty P_\ell(x)e^{-\tfrac{x^2}{2}} \d x \right )P_k(y) e^{-\tfrac{y^2}{2}} \d y\\
	\nonumber &=  (-1)^\ell \int_\HR \left(\int_{-\infty}^{-y} P_\ell(x)e^{-\tfrac{x^2}{2}} \d x \right )P_k(y) e^{-\tfrac{y^2}{2}} \d y\\
	\nonumber  &=  (-1)^{k+\ell} \int_\HR \left(\int_{-\infty}^y P_\ell(x)e^{-\tfrac{x^2}{2}} \d x \right )P_k(y)  e^{-\tfrac{y^2}{2}} \d y\\
	  	\nonumber &=  (-1)^{k+\ell}\langle G_\ell(x), P_k(x)\rangle,
	\end{align}
where the fourth equality is due to the transformation $x\mapsto -x$ and equation \cref{negative_hermite} and the fifth equality is obtained using the transformation $y\mapsto -y$.  This shows (1) for the case $k+\ell$ odd. The case $k+\ell$ even is implied by (2), which we prove next.

Since $k$ and $\ell$ are not both zero, by \cref{eqqqs}, we may assume that $k>0$. In this case,  by \cref{lemma2}, we have $G_k(x) = - P_{k-1}(x) e^{-\frac{x^2}{2}}$, so that
	\begin{align}
	\langle G_k(x), P_\ell(x)\rangle  &= -  \int_\HR P_{k-1}(x)P_\ell(x) e^{-x^2} \d x \nonumber
	 =  - \int_\HR H_{e_{k-1}}(x)H_{e_{\ell}}(x) e^{-x^2} \d x.\label{t1}
	\end{align}
Combining this equation with \cref{important1}, we have
	$$\langle G_k(x), P_\ell(x)\rangle =
	\begin{cases} (-1)^{ \lfloor\tfrac{k-1}{2}\rfloor+ \lfloor\tfrac{\ell}{2}\rfloor + 1} \, \Gamma\left(\frac{k+\ell}{2}\right), &\text{if $k+\ell-1$ is even}\\
	0,&\text{if $k+\ell-1$ is odd} \end{cases}
	$$
In particular, $\langle G_k(x), P_\ell(x)\rangle = 0$ for $k+\ell$ even, which finishes the proof of the first part of this lemma. The second part is proved by replacing $k=2i-1$ and $\ell = 2j$. The case $k=2i$ and $\ell = 2j-1$ is a consequence of the case $k=2i-1$ and $\ell = 2j$ and the first part of the theorem (we can't prove this last case simply by plugging in, because $k=2i$ might violate the assumption~$k>0$).
This finishes the proof.
\end{proof}
\subsection{The expected value of Hermite polynomials}
In this section we will compute the expected value of the Hermite polynomials when the argument follows a normal distribution.
\begin{lemma}\label{expectation_hermite1}
For $\sigma^2>0$ we have  $\mean\limits_{u\sim N(0,\sigma^2)} H_{{2k}}(u)=\frac{(2k)!}{k!}\, (2\sigma^2 -1 )^k$.
\end{lemma}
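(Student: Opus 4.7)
The cleanest route is via the generating function of the physicists' Hermite polynomials,
\[
e^{2xt - t^2} \;=\; \sum_{n=0}^\infty H_n(x)\,\frac{t^n}{n!},
\]
which allows us to handle all $H_{2k}(u)$ simultaneously. The plan is to compute the expectation of both sides under $u\sim N(0,\sigma^2)$ and then read off coefficients of $t^{2k}$.

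First I would apply the standard Gaussian moment generating function identity $\mean_{u\sim N(0,\sigma^2)} e^{2ut} = e^{2\sigma^2 t^2}$ to obtain
\[
\mean\limits_{u\sim N(0,\sigma^2)} e^{2ut - t^2} \;=\; e^{-t^2}\,e^{2\sigma^2 t^2} \;=\; e^{(2\sigma^2 - 1)t^2}.
\]
Next, I would expand the right-hand side as
\[
e^{(2\sigma^2 - 1)t^2} \;=\; \sum_{k=0}^\infty \frac{(2\sigma^2 - 1)^k}{k!}\,t^{2k},
\]
and exchange expectation and summation on the left to get
\[
\sum_{n=0}^\infty \mean\limits_{u\sim N(0,\sigma^2)} H_n(u)\,\frac{t^n}{n!} \;=\; \sum_{k=0}^\infty \frac{(2\sigma^2 - 1)^k}{k!}\,t^{2k}.
\]
Comparing coefficients of $t^{2k}$ yields the claim $\mean H_{2k}(u) = \frac{(2k)!}{k!}(2\sigma^2 - 1)^k$ (and as a byproduct $\mean H_{2k+1}(u) = 0$).

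The only non-routine point is justifying the interchange of expectation and infinite summation, but this is immediate since $\mean e^{2|u|\,|t|} < \infty$ provides an integrable majorant for the partial sums of $e^{2ut-t^2}$, so dominated convergence applies uniformly for $t$ in any compact set. If one prefers to avoid generating functions entirely, an equivalent second approach is to insert the representation \cref{hermite_and_kummer}, namely $H_{2k}(u) = (-1)^k \frac{(2k)!}{k!} M(-k,\tfrac{1}{2},u^2)$, use the explicit series \cref{def_M}, and evaluate term-by-term with the even Gaussian moments $\mean[u^{2j}] = \frac{(2j)!}{2^j j!}\sigma^{2j}$; the finite sum collapses to $(2\sigma^2-1)^k$ via the binomial theorem. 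The generating-function argument is however the shortest and avoids the Pochhammer bookkeeping.
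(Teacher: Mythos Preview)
Your proof is correct and takes a genuinely different route from the paper. The paper simply writes the expectation as the integral
\[
\frac{1}{\sqrt{2\pi\sigma^2}}\int_{-\infty}^\infty H_{2k}(u)\,e^{-u^2/(2\sigma^2)}\,\d u,
\]
substitutes $w=u/\sqrt{2\sigma^2}$, and then quotes the tabulated identity \cite[7.373.2]{gradshteyn} for $\int_{-\infty}^\infty H_{2k}(\alpha w)e^{-w^2}\,\d w$. Your generating-function argument, by contrast, is entirely self-contained: it needs nothing beyond the Gaussian MGF and the series for $e^{2xt-t^2}$, and it delivers the odd-index vanishing $\mean H_{2k+1}(u)=0$ as a free byproduct. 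The paper's approach is shorter on the page only because the real work is outsourced to an integral table; your version actually explains why the answer has the form $(2\sigma^2-1)^k$. The alternative Kummer route you sketch is also valid and closer in spirit to how the paper handles the more delicate integrals in \cref{expectation_hermite2}, but for this particular lemma the generating function is indeed the cleanest choice.
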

	\begin{proof}
Write
	\begin{equation*}
	\mean\limits_{u\sim N(0,\sigma^2)} H_{{2k}}(u) \stackrel{\text{by definition}}{=}
	\frac{1}{\sqrt{2\pi\sigma^2}} \int_{u=-\infty}^\infty H_{2k}(u)\, e^{-\tfrac{u^2}{2\sigma^2}} \d u = \frac{1}{\sqrt{\pi}} \int_{w=-\infty}^\infty H_{2k}(\sqrt{2\sigma^2}\, w) \, e^{-w^2} \d w,
	\end{equation*}
	where the second equality is due to the change of variables $w := \tfrac{u}{\sqrt{2\sigma^2}}$. Applying \cite[7.373.2]{gradshteyn} we get
	\[\frac{1}{\sqrt{\pi}} \int_{w=-\infty}^\infty H_{2k}(\sqrt{2\sigma^2}\, w) e^{-w^2} \d w= \frac{(2k)!\, (2\sigma^2 -1 )^k}{k!} .\]
This finishes the proof.
\end{proof}
\begin{lemma}\label{expectation_hermite2}
Let $\sigma^2>0$ and recall from \cref{my_polynomials} the definition of $P_k(x)$, $k=-1,0,1,2,\ldots$.
\begin{enumerate}
\item If $k,\ell>0$ and $k+\ell$ is even, we have
\[\mean\limits_{u\sim N(0,\sigma^2)} P_k(u)P_\ell(u) e^{-\tfrac{u^2}{2}} = \frac{(-1)^{\frac{k+\ell}{2}}\; \sqrt{2}^{k+\ell}\, \Gamma\left(\frac{k+\ell+1}{2}\right)}{\, \sqrt{\pi} \;\sqrt{\sigma^2+1}^{\,k+\ell+1}}  \, F\left(-k,-\ell;\tfrac{1-k-\ell}{2};\tfrac{\sigma^2+1}{2}\right).\]
\item For all $k$ we have
\[\mean\limits_{u\sim N(0,\sigma^2)} P_{-1}(u)P_{2k+1}(u) e^{-\tfrac{u^2}{2}} = \frac{(-1)^{k+1} (2k+1)!}{2^{k}\,k!}\,\frac{(1-\sigma^2)^k\sigma^{2}}{\sqrt{1+\sigma^{2}} }\;F\left(-k,\tfrac{1}{2},\tfrac{3}{2},\tfrac{\sigma^4}{\sigma^4-1}\right).\]
\end{enumerate}
\end{lemma}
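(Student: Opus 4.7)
I would combine the two Gaussian weights. The density factor $\frac{1}{\sqrt{2\pi\sigma^2}}e^{-u^2/(2\sigma^2)}$ times the extra $e^{-u^2/2}$ in the integrand collapses to a single weight $e^{-\alpha^2 u^2}/\sqrt{2\pi\sigma^2}$ with $\alpha^2:=\tfrac{1+\sigma^2}{2\sigma^2}$. Since $k,\ell\geq 0$ we have $P_k=H_{e_k}$, so the expectation equals $\frac{1}{\sqrt{2\pi\sigma^2}}\int_\HR H_{e_k}(u)H_{e_\ell}(u)e^{-\alpha^2 u^2}\,\d u$, which is exactly the integral evaluated in equation \cref{important2}. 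Using the elementary identities $1-2\alpha^2=-1/\sigma^2$ and $\alpha^2/(2\alpha^2-1)=(1+\sigma^2)/2$, the right-hand side of \cref{important2} rearranges to the closed form in part (1); the factor $(-1)^{(k+\ell)/2}$ comes from $(1-2\alpha^2)^{(k+\ell)/2}$, and the prefactor $\sqrt{2}^{\,k+\ell}/(\sqrt{\pi}\sqrt{1+\sigma^2}^{\,k+\ell+1})$ comes from $\tfrac{1}{\sqrt{2\pi\sigma^2}}\cdot\tfrac{1}{\alpha^{k+\ell+1}}$.

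\textbf{Plan for (2).} After writing $P_{-1}(u)P_{2k+1}(u)e^{-u^2/2}=-\sqrt{2\pi}\,\Phi(u)H_{e_{2k+1}}(u)$ and absorbing $\sqrt{2\pi}/\sqrt{2\pi\sigma^2}=1/\sigma$, the problem reduces to evaluating $J_n:=\int_\HR\Phi(u)H_{e_n}(u)e^{-u^2/(2\sigma^2)}\,\d u$ for $n=2k+1$. I would exploit the generating function $\sum_{n\geq 0}H_{e_n}(u)t^n/n!=e^{ut-t^2/2}$, which identifies $J_n$ as $n!$ times the coefficient of $t^n$ in $\int_\HR\Phi(u)e^{ut-t^2/2-u^2/(2\sigma^2)}\,\d u$. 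Completing the square in $u$ produces a shifted Gaussian of mean $\sigma^2 t$ and variance $\sigma^2$, and pulls out a factor $e^{(\sigma^2-1)t^2/2}$; the remaining inner integral equals $\sqrt{2\pi\sigma^2}\cdot\mean\Phi(U)$ for $U\sim N(\sigma^2 t,\sigma^2)$, and by the classical identity $\mean\Phi(\mu+\sigma Z)=\Phi(\mu/\sqrt{1+\sigma^2})$ this is $\sqrt{2\pi\sigma^2}\,\Phi\bigl(\sigma^2 t/\sqrt{1+\sigma^2}\bigr)$.

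\textbf{Extracting the coefficient and recognizing $F$.} Setting $a=(\sigma^2-1)/2$ and $b=\sigma^2/\sqrt{1+\sigma^2}$, it remains to compute $[t^{2k+1}]\bigl(e^{at^2}\Phi(bt)\bigr)$. Expanding $\Phi(bt)=\tfrac{1}{2}+\tfrac{1}{\sqrt{2\pi}}\sum_{j\geq 0}\tfrac{(-1)^j(bt)^{2j+1}}{(2j+1)2^j j!}$, only odd powers contribute, and convolving against $e^{at^2}=\sum_m a^m t^{2m}/m!$ yields
\[
[t^{2k+1}]\bigl(e^{at^2}\Phi(bt)\bigr)=\frac{b}{\sqrt{2\pi}}\sum_{j=0}^{k}\frac{a^{k-j}(-1)^j b^{2j}}{(k-j)!(2j+1)2^j j!}.
\]
Substituting $a$ and $b$ back in, factoring out $(\sigma^2-1)^k/(2^k k!)$, and introducing $z:=\sigma^4/(1-\sigma^4)$ rewrites the remaining sum as $\sum_{j=0}^k\binom{k}{j}z^j/(2j+1)$. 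Using $(-k)_j=(-1)^j k!/(k-j)!$ and $(1/2)_j/(3/2)_j=1/(2j+1)$, this sum is recognized as $F(-k,\tfrac{1}{2},\tfrac{3}{2},-z)=F(-k,\tfrac{1}{2},\tfrac{3}{2},\tfrac{\sigma^4}{\sigma^4-1})$. Multiplying by $(2k+1)!$, dividing by $\sigma$, tacking on the overall minus sign, and converting $(\sigma^2-1)^k=(-1)^k(1-\sigma^2)^k$ yields exactly the claimed expression.

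\textbf{Main obstacle.} None of the steps are individually deep; the delicate part is the bookkeeping of signs and square roots in the coefficient extraction, and in particular pinning down the correct argument $\tfrac{\sigma^4}{\sigma^4-1}$ of the hypergeometric function (as opposed to $\tfrac{\sigma^4}{1-\sigma^4}$) when converting the sum $\sum_j\binom{k}{j}z^j/(2j+1)$ into $F(-k,\tfrac12,\tfrac32,\cdot)$ via the Pochhammer identities. Getting the exponent of $\sqrt{1+\sigma^2}$ and the overall sign $(-1)^{k+1}$ right at the end is the place where a careful accounting is needed.
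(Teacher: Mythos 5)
Your argument for part (1) is correct and essentially identical to the paper's: both combine the Gaussian density with the extra factor $e^{-u^2/2}$ into a single weight $e^{-\alpha^2 u^2}$ with $\alpha^2=\tfrac{1+\sigma^2}{2\sigma^2}$ and then invoke \cref{important2}, simplifying via $1-2\alpha^2=-1/\sigma^2$ and $\alpha^2/(2\alpha^2-1)=(1+\sigma^2)/2$.

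For part (2) your route is genuinely different and, as far as I can verify, also correct. The paper rescales to physicists' Hermite polynomials, writes both $H_{2k+1}$ and $\mathrm{erf}$ as Kummer functions via \cref{hermite_and_kummer} and \cref{error_function_rel2}, and then quotes the tabulated integral \cite[7.622.1]{gradshteyn} for $\int_0^\infty\sqrt{t}\,M(\tfrac12,\tfrac32,-t)M(-k,\tfrac32,t)e^{-t/\sigma^2}\,\d t$, which hands over the Gauss hypergeometric directly. You instead use the generating function $\sum_n H_{e_n}(u)t^n/n!=e^{ut-t^2/2}$, complete the square to reduce the problem to $\mean\Phi(U)$ for a shifted Gaussian $U$, apply the classical identity $\mean\Phi(\mu+\sigma Z)=\Phi\bigl(\mu/\sqrt{1+\sigma^2}\bigr)$, and extract the coefficient of $t^{2k+1}$ from $e^{at^2}\Phi(bt)$ by elementary series manipulation; the resulting finite sum $\sum_{j}\binom{k}{j}z^j/(2j+1)$ is correctly identified with $F(-k,\tfrac12,\tfrac32,-z)$ via $(-k)_j/j!=(-1)^j\binom{k}{j}$ and $(\tfrac12)_j/(\tfrac32)_j=1/(2j+1)$. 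I checked that $-b^2/(2a)=\sigma^4/(1-\sigma^4)$, so the argument of $F$ comes out as $\sigma^4/(\sigma^4-1)$ as claimed, and the factors $\sigma b\cdot a^k/k!$ reproduce the prefactor $(-1)^{k+1}(2k+1)!(1-\sigma^2)^k\sigma^2/(2^k k!\sqrt{1+\sigma^2})$ after dividing by $\sigma$. The trade-off: the paper's proof is shorter on the page but rests on a nontrivial table entry, whereas yours is self-contained, using only the probabilist's identity for $\mean\Phi$ and power-series algebra. The one point you should make explicit is the justification for interchanging the sum over $n$ with the integral over $u$ when reading off $J_{2k+1}$ from the generating function (absolute convergence of $\int_\HR e^{|ut|-u^2/(2\sigma^2)}\,\d u$ for each fixed $t$ suffices, since $0\le\Phi\le1$); this is routine but deserves a sentence.
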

\begin{proof}
To prove (1) we write
\begin{align*}
	\mean\limits_{u\sim N(0,\sigma^2)} P_k(u)P_\ell(u) e^{-\tfrac{u^2}{2}} &=\mean\limits_{u\sim N(0,\sigma^2)} H_{e_{k}}(u) H_{e_\ell}(u) e^{-\tfrac{u^2}{2}}\\
	&= \frac{1}{\sqrt{2\pi\sigma^2}}  \int_{u=-\infty}^\infty H_{e_{k}}(u) H_{e_\ell}(u) e^{-\tfrac{u^2}{2}\left(1+\tfrac{1}{\sigma^2}\right)}  \d u.
\end{align*}
Put $\alpha^2:=\tfrac{1}{2}\left(1+\tfrac{1}{\sigma^2}\right)$ and observe that $\alpha^2\neq \tfrac{1}{2}$. By \cref{important2} we have
\begin{align*}
&\frac{1}{\sqrt{2\pi\sigma^2}} \, \int_{u=-\infty}^\infty \,H_{e_{k}}(u) H_{e_\ell}(u) e^{-\tfrac{u^2}{2}\left(1+\tfrac{1}{\sigma^2}\right)} \, \d u\\
 =  \quad &\frac{(1-2\alpha^2)^{\frac{k+\ell}{2}}\,\Gamma\left(\frac{k+\ell+1}{2}\right)}{\sqrt{2\pi\sigma^2} \;\alpha^{k+\ell+1}}  \, F\left(-k,-\ell;\tfrac{1-k-\ell}{2};\tfrac{\alpha^2}{2\alpha^2-1}\right)\\
 =  \quad &\frac{(-1)^{\frac{k+\ell}{2}}\; \sqrt{2}^{k+\ell}\, \Gamma\left(\frac{k+\ell+1}{2}\right)}{\, \sqrt{\pi} \;\sqrt{\sigma^2+1}^{\,k+\ell+1}}  \, F\left(-k,-\ell;\tfrac{1-k-\ell}{2};\tfrac{\sigma^2+1}{2}\right)
\end{align*}
This proves (1). For (2) we have
\begin{align*}
\mean\limits_{u\sim N(0,\sigma^2)} P_{-1}(u)P_{{2k+1}}(u) e^{-\tfrac{u^2}{2}} &=-\sqrt{2\pi}\,\mean\limits_{u\sim N(0,\sigma^2)} \Phi(u)H_{e_{2k+1}}(u) \\
&= \frac{-1}{\sigma} \, \int_{u=-\infty}^\infty \,\Phi(u) H_{e_{2k+1}}(u) \,e^{-\tfrac{u^2}{2\sigma^2}} \, \d u.
 \end{align*}
Making a change of variables $x:=\tfrac{u}{\sqrt{2}}$ the last integral becomes
	\begin{equation*}
	\frac{-1}{\sqrt{2}\, \sigma} \, \int_{u=-\infty}^\infty \,\Phi(\sqrt{2}\,x) H_{e_{2k+1}}(\sqrt{2}\,x) \,e^{-\tfrac{x^2}{\sigma^2}} \, \d x.
	\end{equation*}
Using \cref{error_function_rel} and \cref{hermite_relation} we write this integral as
$$\frac{-1}{2^{k+1}\,\sigma} \, \int_{x=-\infty}^\infty \,(1+\mathrm{erf}(x)) H_{{2k+1}}(x) \,e^{-\tfrac{x^2}{\sigma^2}} \, \d x.$$
We know from \cref{negative_hermite} that $H_{2k+1}(x)$ is an odd function, which implies that $$\int_{x=-\infty}^\infty H_{2k+1}(x) e^{-\tfrac{x^2}{\sigma^2}} \, \d x=0.$$ Moreover, by \cref{hermite_and_kummer} we have $H_{2k+1}(x)=(-1)^k \frac{(2k+1)!\,2x}{k!}\,  M(-k,\tfrac{3}{2},x^2)$ and by \cref{error_function_rel2} we have $\mathrm{erf}(x)=\frac{2x}{\sqrt{\pi}}\,M\left(\tfrac{1}{2},\tfrac{3}{2},-x^2\right)$. All this shows that
	\begin{align}
	\mean\limits_{u\sim N(0,\sigma^2)} P_{-1}(u)P_{{2k+1}}(u) e^{-\tfrac{u^2}{2}}&=\frac{(-1)^{k+1} (2k+1)!}{2^{k-1}\,\sqrt{\pi}\,\sigma\, k!} \, \int_{x=-\infty}^\infty \,x^2\,M\left(\tfrac{1}{2},\tfrac{3}{2},-x^2\right)\,M(-k,\tfrac{3}{2},x^2) \,e^{-\tfrac{x^2}{\sigma^2}} \, \d x \nonumber\\
	&=\frac{(-1)^{k+1} (2k+1)!}{2^{k-2}\,\sqrt{\pi}\,\sigma\, k!} \, \int_{x=0}^\infty \,x^2\,M\left(\tfrac{1}{2},\tfrac{3}{2},-x^2\right)\,M(-k,\tfrac{3}{2},x^2) \,e^{-\tfrac{x^2}{\sigma^2}} \, \d x.\label{105}
	\end{align}
where for the second equality we used that the integrand is an even function. Making a change of variables $t:=x^2$ we see that
	\begin{equation*}
	\int_{x=0}^\infty \,x^2\,M\left(\tfrac{1}{2},\tfrac{3}{2},-x^2\right)\,M(-k,\tfrac{3}{2},x^2) \,e^{-\tfrac{x^2}{\sigma^2}} \, \d x=\frac{1}{2}\int_{t=0}^\infty \,\sqrt{t}\;M\left(\tfrac{1}{2},\tfrac{3}{2},-t\right)\,M(-k,\tfrac{3}{2},t) \,e^{-\tfrac{t}{\sigma^2}} \, \d t.
\end{equation*}
By \cite[7.622.1]{gradshteyn} we have
	\begin{equation*}
	\int_{t=0}^\infty \,\sqrt{t}\;M\left(\tfrac{1}{2},\tfrac{3}{2},-t\right)\,M(-k,\tfrac{3}{2},t) \,e^{-\tfrac{t}{\sigma^2}} \, \d t=\Gamma\left(\frac{3}{2}\right)\,\frac{(1-\sigma^2)^k\sigma^{3}}{\sqrt{1+\sigma^{2}} }\;F\left(-k,\tfrac{1}{2},\tfrac{3}{2},\tfrac{\sigma^4}{\sigma^4-1}\right)
\end{equation*}
Plugging the last two equations into \cref{105} we obtain
\begin{align*}
\mean\limits_{u\sim N(0,\sigma^2)} P_{-1}(u)P_{{2k+1}}(u) e^{-\tfrac{u^2}{2}}&=\frac{(-1)^{k+1} (2k+1)!}{2^{k-1}\,\sqrt{\pi}\,\sigma\, k!}\;\Gamma\left(\tfrac{3}{2}\right)\,\frac{(1-\sigma^2)^k\sigma^{3}}{\sqrt{1+\sigma^{2}} }\;F\left(-k,\tfrac{1}{2},\tfrac{3}{2},\tfrac{\sigma^4}{\sigma^4-1}\right)\\
&=\frac{(-1)^{k+1} (2k+1)!}{2^{k}\,k!}\,\frac{(1-\sigma^2)^k\sigma^{2}}{\sqrt{1+\sigma^{2}} }\;F\left(-k,\tfrac{1}{2},\tfrac{3}{2},\tfrac{\sigma^4}{\sigma^4-1}\right).
\end{align*}
For the second equality we have used $\Gamma\left(\tfrac{3}{2}\right)=\tfrac{\sqrt{\pi}}{2}$, see \cite[43:4:3]{atlas}. This finishes the proof.
	\end{proof}
\section{Proof of Theorem \ref{cor}}\label{sec:proof2}
In this section we prove \cref{cor}. In \cite[Theorem 4.3]{draisma-horobet} Draisma and Horobet present a formula for $E(n,p)$ in terms of the expected modulus of the characteristic polynomial of a GOE-matrix:
	\begin{equation*}
	E(n,p)=\frac{\sqrt{\pi}}{\sqrt{2}^{\,n-1}\,\Gamma(\tfrac{n}{2})}\;\mean\limits_{\substack{w\sim N(0,1)\\ A\sim \mathrm{GOE}(n-1)}} \left\lvert \det\left(\sqrt{2(p-1)}\,A-\sqrt{p}\,w I_{n-1} \right)\right\rvert,
\end{equation*}
where $I_{n-1}$ is the $(n-1)\times(n-1)$ identity matrix.
Observe that
	\begin{equation*}\label{E-eq2}
	\mean\limits_{A\sim \mathrm{GOE}(n-1)} \left\lvert \det\big(\sqrt{2(p-1)}\,A-\sqrt{p}\,w I_{n-1} \big)\right\rvert = \sqrt{2(p-1)}^{n-1}\mean\limits_{A\sim \mathrm{GOE}(n-1)} \left\lvert \det(A-uI_{n-1})\right\rvert,
\end{equation*}
where $ u= \sqrt{\tfrac{p}{2(p-1)}}\, w$. Using the notation from \cref{def_I_J} we therefore have
\begin{equation}\label{100}
E(n,p)=\frac{\sqrt{\pi}\sqrt{p-1}^{\,n-1}}{\Gamma(\tfrac{n}{2})}\mean\limits_{u\sim N(0,\sigma^2)} \cI_{n-1}(u),   \text{ with } \sigma^2=\frac{p}{2(p-1)}.
\end{equation}
We now have to distinguish between the cases $n$ even and $n$ odd. The distinction between those cases is due to the nature of \cref{thm}: The formula for $\cI_{n-1}(u)$ depends on the parity of $n$.
\subsection{Proof of Theorem \ref{cor} (1)} In this case we have $n=2m+1$ and hence $n-1=2m$, so that \cref{100} becomes
$$E(2m+1,p) = \frac{\sqrt{\pi}\,(p-1)^{m}}{\Gamma(\tfrac{2m+1}{2})}\mean\limits_{u\sim N(0,\sigma^2)} \cI_{2m}(u),  \text{ where } \sigma^2=\frac{p}{2(p-1)}.$$
We know from \cref{thm} (1) that
\begin{align}\label{302}
\cI_{2m}(u)=&\cJ_{2m}(u)+\frac{\sqrt{2\pi} \,e^{-\tfrac{u^2}{2}}}{\prod_{i=1}^{2m}\Gamma\left(\tfrac{i}{2}\right)}\,\sum_{1\leq i,j \leq m} \det(\Gamma_1^{i,j}) \; \det\begin{bmatrix} P_{2i-1}(u) & P_{2j}(u) \\ P_{2i-2}(u) & P_{2j-1}(u) \end{bmatrix},
\end{align}
For taking the expectation of $\cI_{2m}(u)$ over $u$ we may take the expectation over the two summands separately. The first expectation is given by the following lemma. We will prove it in \cref{proof_lem_301} below.
\begin{lemma}\label{301} We have
$\frac{\sqrt{\pi}\,(p-1)^m}{\Gamma(\tfrac{2m+1}{2})}\;\mean\limits_{u\sim N(0,\sigma^2)} \cJ_{2m}(u) = 1$.
\end{lemma}
The second expectation is given by the following lemma. We will prove it in \cref{proof_lem_10}.
\begin{lemma}\label{lemma10}
Recall from \cref{cor} the definition for $j>0$:
	\begin{equation*}
	g_{i,j}(p) = \frac{\Gamma\left(i+j-\frac{1}{2}\right)}{\tfrac{1-2i-2j}{1-2i+2j}\left(-\tfrac{3p-2}{4(p-1)}\right)^{i+j-1}} \,  F\left(-2i,1-2j,\tfrac{3}{2}-i-j,\tfrac{3p-2}{4(p-1)}\right)
	\end{equation*}
For any $1\leq i,j\leq m$ we have
	\begin{equation*}
	 \mean\limits_{u\sim N(0,\sigma^2)}e^{-\tfrac{u^2}{2}}\det\begin{bmatrix} P_{2i-1}(u) & P_{2j}(u) \\ P_{2i-2}(u) & P_{2j-1}(u) \end{bmatrix}\\
	= \frac{1}{\sqrt{2\pi}}\, \sqrt{\frac{3p-2}{p-1}}\,g_{i-1,j}(p).
\end{equation*}
\end{lemma}
\cref{301} and \cref{lemma10} in combination with \cref{100} and \cref{302} show that $E(2m+1,p)$ equals
	\begin{align*}
	E(2m+1,p) &=1+\frac{\sqrt{\pi}\,(p-1)^{m-1}\sqrt{(p-1)(3p-2)}}{\prod_{i=1}^{2m+1}\Gamma\left(\tfrac{i}{2}\right)}\sum_{1\leq i,j\leq m} \det(\Gamma_1^{i,j}) \, g_{i-1,j}(p),
	\end{align*}
which shows \cref{cor} (1).\qed

\subsection{The case $n=2m$ is even} In this case we have $n-1=2m-1$, so that \cref{100} becomes
\begin{equation*}E(2m,p)=\frac{\sqrt{\pi}\sqrt{p-1}^{\,2m-1}}{\Gamma(2m)}\mean\limits_{u\sim N(0,\sigma^2)}\cI_{2m-1}(u),\quad \text{where } \sigma^2=\frac{p}{2(p-1)}.
\end{equation*}
We apply \cref{thm} (2) to obtain
\[\cI_{2m-1}(u)=\cJ_{2m-1}(u)  +  \frac{\sqrt{2}\,e^{-\tfrac{u^2}{2}}}{\prod_{i=1}^{2m-1}\Gamma\left(\tfrac{i}{2}\right)}\,\sum_{0\leq i,j\leq m-1}  \det(\Gamma_2^{i,j})\,  \det\begin{bmatrix} P_{2j}(u) &   P_{2i+1}(u)\\   P_{2j-1}(u) & P_{2i}(u)\end{bmatrix}.\]
Since the normal distribution is symmetric around the origin we have
	\begin{align}
	\mean\limits_{u\sim N(0,\sigma^2)} \cJ_{2m-1}(u)&=\mean\limits_{u\sim N(0,\sigma^2)}\mean\limits_{A\sim \mathrm{GOE}(2m-1)}  \det(A-u I_{2m-1})\label{exp_J0}\\
	&=(-1)^{2m-1}\mean\limits_{u\sim N(0,\sigma^2)}\mean\limits_{A\sim \mathrm{GOE}(2m-1)}  \det((-A)-(-u) I_{2m-1})\nonumber\\
	&= -\mean\limits_{u\sim N(0,\sigma^2)}\mean\limits_{A\sim \mathrm{GOE}(2m-1)}  \det(A-u I_{2m-1})\nonumber\\
	 &= -\mean\limits_{u\sim N(0,\sigma^2)} \cJ_{2m-1}(u),\nonumber
	\end{align}
and hence $\mean\limits_{u\sim N(0,\sigma^2)} \cJ_{2m-1}(u)=0$. This shows that
\begin{equation*}
E(2m,p)=\frac{\sqrt{2\pi}\sqrt{p-1}^{\,2m-1}}{\prod_{i=1}^{2m}\Gamma\left(\tfrac{i}{2}\right)}\sum_{0\leq i,j\leq m-1}  \det(\Gamma_2^{i,j}) \mean\limits_{u\sim N(0,\sigma^2)}e^\frac{-u^2}{2}  \det\begin{bmatrix} P_{2j}(u) &   P_{2i+1}(u)\\   P_{2j-1}(u) & P_{2i}(u)\end{bmatrix}.
\end{equation*}
The next lemma gives a formula for the expected values under the summation sign in this equation. We prove it in \cref{proof_lem_3} below.
\begin{lemma}\label{lemma3}
Recall from \cref{cor} the definition
	\begin{align*}
	g_{i,j}(p) = \begin{cases}
	\displaystyle\frac{\sqrt{\pi}(2i+1)!}{(-1)^{i} 2^{2i}\,i!}\,\frac{(p-2)^ip}{(p-1)^{i}(3p-2)} \;F\left(-i,\tfrac{1}{2},\tfrac{3}{2},\tfrac{-p^2}{(3p-2)(p-2)}\right) - \frac{   \Gamma\left(j+\frac{1}{2}\right)}{2\,\left(-\frac{3p-2}{4(p-1)}\right)^{j+1}}, &\text{ if } j =0 \\[0.2cm]
	\displaystyle\frac{ \,\Gamma\left(i+j+\frac{1}{2}\right)}{\tfrac{(1-2i-2j)}{(1-2i+2j)}\,\left(-\tfrac{3p-2}{4(p-1)}\right)^{i+j} } F\left(-2i,1-2j,\tfrac{3}{2}-i-j,\tfrac{3p-2}{4(p-1)}\right), &\text{ if } j >0. \end{cases}
	\end{align*}
For all $0\leq i\leq m-1$ and $0\leq j\leq m-1$ the following holds.
	\begin{align*}
\mean\limits_{u\sim N(0,\sigma^2)}e^\frac{-u^2}{2}\det\begin{bmatrix} P_{2j}(u) &   P_{2i+1}(u)\\   P_{2j-1}(u) & P_{2i}(u)\end{bmatrix} = \frac{1}{\sqrt{2\pi}} \,\sqrt{\frac{3p-2}{p-1}}\, g_{i,j}(p).
\end{align*}
\end{lemma}
The lemma implies that
\begin{equation*}
E(2m,p)=\frac{(p-1)^{m-1} \,\sqrt{(p-1)(3p-2)}}{\prod_{i=1}^{2m}\Gamma\left(\tfrac{i}{2}\right)}\sum_{0\leq i,j\leq m-1}  \det(\Gamma_2^{i,j}) g_{i,j}(p).
\end{equation*}
which proves \cref{cor} (2).\qed

\subsection{Proofs of the lemmata}
Before proving the lemmata from the previous subsections, we will have to prove the following technical lemma.
\begin{lemma}\label{301.1} For all $m\geq 1$ we have
\begin{enumerate}
\item $\sqrt{\pi}\, (2(m-1))! (2m-1)=2^{2m-1} \;\Gamma\left(\tfrac{2m+1}{2}\right)\;\Gamma(m)$.
\item $\sqrt{\pi}^{\, m+1}\,\left[\prod_{i=1}^{m-1} (2i)!\right](2m)!=m!2^{\,m(m+1)} \; \prod_{i=1}^{2m+1}\Gamma\left(\tfrac{i}{2}\right)$.
\end{enumerate}
\end{lemma}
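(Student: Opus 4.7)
Both identities in the lemma are elementary consequences of the Legendre duplication formula for the Gamma function,
\[
\Gamma(z)\,\Gamma\!\left(z+\tfrac{1}{2}\right)=2^{1-2z}\sqrt{\pi}\,\Gamma(2z),
\]
and its specialization at half-integers, $\Gamma\!\left(k+\tfrac{1}{2}\right)=\tfrac{(2k)!}{4^k\,k!}\sqrt{\pi}$ for integers $k\geq 0$. The plan is to dispose of part (1) by a single application of the duplication formula, and then to prove part (2) by splitting the product $\prod_{i=1}^{2m+1}\Gamma(\tfrac{i}{2})$ according to the parity of $i$ and applying the half-integer formula termwise.

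For (1), setting $z=m$ in the duplication formula gives $\Gamma(m)\Gamma\!\left(\tfrac{2m+1}{2}\right)=2^{1-2m}\sqrt{\pi}\,(2m-1)!$. Multiplying by $2^{2m-1}$ and rewriting $(2m-1)!=(2m-1)\cdot(2(m-1))!$ yields the claimed identity directly. This step is routine and presents no real obstacle.

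For (2), I would split
\[
\prod_{i=1}^{2m+1}\Gamma\!\left(\tfrac{i}{2}\right)=\underbrace{\prod_{k=1}^{m}\Gamma(k)}_{\text{even }i=2k}\;\cdot\;\underbrace{\prod_{k=0}^{m}\Gamma\!\left(k+\tfrac{1}{2}\right)}_{\text{odd }i=2k+1}.
\]
The first factor equals $\prod_{k=1}^m (k-1)!$. For the second factor, I apply $\Gamma(k+\tfrac{1}{2})=\tfrac{(2k)!\sqrt{\pi}}{4^k k!}$ for each $k=0,\ldots,m$, producing $\sqrt{\pi}^{\,m+1}\prod_{k=0}^m\tfrac{(2k)!}{4^k k!}$. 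Then I combine the pieces: the powers of $4$ accumulate to $\prod_{k=1}^m 4^k = 2^{m(m+1)}$; the telescoping identity $\prod_{k=1}^m(k-1)!\big/\prod_{k=1}^m k!=1/m!$ collapses the factorial bookkeeping; and the remaining $\prod_{k=1}^m(2k)!$ factors as $\left[\prod_{k=1}^{m-1}(2k)!\right]\cdot(2m)!$. Rearranging gives exactly (2).

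The only thing to be careful about is the index bookkeeping, in particular keeping track of the $k=0$ term in the odd-index product (which contributes $\Gamma(\tfrac{1}{2})=\sqrt{\pi}$ and the factor $\tfrac{(0)!}{4^0\,0!}=1$) and ensuring the exponent of $\sqrt{\pi}$ comes out to $m+1$. No deeper machinery is needed, so I expect the proof to be a short two-paragraph computation, one for each part.
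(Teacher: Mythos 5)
Your proof is correct, but it takes a genuinely different route from the paper. The paper proves both identities by induction on $m$: for (1) it checks $m=1$ and then uses $\Gamma(x+1)=x\Gamma(x)$ to pass from $m-1$ to $m$, and for (2) it reduces the induction step to part (1). You instead give a direct, non-inductive computation: part (1) is a one-line consequence of the Legendre duplication formula $\Gamma(z)\Gamma(z+\tfrac12)=2^{1-2z}\sqrt{\pi}\,\Gamma(2z)$ at $z=m$, and part (2) follows by splitting $\prod_{i=1}^{2m+1}\Gamma(\tfrac{i}{2})$ into the even-index factors $\prod_{k=1}^m(k-1)!$ and the odd-index factors, evaluating the latter with $\Gamma(k+\tfrac12)=\tfrac{(2k)!}{4^k k!}\sqrt{\pi}$, and telescoping $\prod_{k=1}^m(k-1)!/k!=1/m!$. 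I checked the bookkeeping (the exponent $4^{\sum_{k=0}^m k}=2^{m(m+1)}$, the $m+1$ copies of $\sqrt{\pi}$, and the split $\prod_{k=1}^m(2k)!=\bigl[\prod_{k=1}^{m-1}(2k)!\bigr](2m)!$) and it all comes out right. Your argument is shorter and makes the structural reason for the identity more transparent; the paper's induction has the minor advantage of relying only on the identities it already cites ($\Gamma(\tfrac12)=\sqrt{\pi}$, $\Gamma(\tfrac32)=\tfrac{\sqrt{\pi}}{2}$, and the recurrence), whereas you additionally invoke the duplication formula, which you should cite (it is standard, and the half-integer formula you use for part (2) is itself derivable from the recurrence alone). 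Either proof would serve.
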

\begin{proof} We will use the identities $\Gamma(\tfrac{1}{2})=\sqrt{\pi}$ and $\Gamma(\tfrac{3}{2})=\tfrac{\sqrt{\pi}}{2}$ \cite[43:4:3]{atlas} and $\Gamma(x+1)=x\Gamma(x)$ for $x>0$ \cite[43:4:3]{atlas}. We prove both claims using an induction argument. For (1) and $m=1$ we have
	\[\frac{\sqrt{\pi}\, (2(m-1))! (2m-1)}{2^{2m-1} \;\Gamma\left(\tfrac{2m+1}{2}\right)\;\Gamma(m)} = \frac{\sqrt{\pi}}{\sqrt{\pi}} = 1.\]
For $m>1$, using the induction hypothesis, we have
	\begin{align*}\frac{\sqrt{\pi}\, (2(m-1))! (2m-1)}{2^{2m-1}\;\Gamma\left(\tfrac{2m+1}{2}\right)\;\Gamma(m)}
	 = &\frac{(2m-2)(2m-3) }{4} \frac{2m-1}{2m-3} \frac{\Gamma\left(\tfrac{2m-1}{2}\right)}{\Gamma\left(\tfrac{2m+1}{2}\right)} \frac{\Gamma(m-1)}{\Gamma(m)}\\
	  = &\frac{(2m-2)(2m-1) }{4} \frac{2}{2m-1} \frac{1}{m-1} = 1
	 \end{align*}
For (2) and $m=1$ we have
	\[\sqrt{\pi}^{\, m+1}\,\left[\prod_{i=1}^{m-1} (2i)!\right](2m)!=2\pi  = m!2^{\,m(m+1)} \; \prod_{i=1}^{2m+1}\Gamma\left(\tfrac{i}{2}\right)\]
For $m>1$, using the induction hypothesis, we have
	\begin{align*}\frac{\sqrt{\pi}^{\, m+1}\,\left[\prod_{i=1}^{m-1} (2i)!\right](2m)!}{m!2^{\,m(m+1)} \; \prod_{i=1}^{2m+1}\Gamma\left(\tfrac{i}{2}\right)}
	 = &\frac{\sqrt{\pi}\,(2(m-1))! 2m (2m-1)}{m2^{2m} \; \Gamma\left(\tfrac{2m+1}{2}\right) \;\Gamma(m)} = \frac{\sqrt{\pi}\,(2(m-1))! (2m-1)}{2^{2m-1} \; \Gamma\left(\tfrac{2m+1}{2}\right) \;\Gamma(m)}=1,
	 \end{align*}
the last equality because of (1).  This finishes the proof.
\end{proof}
\subsubsection{Proof of Lemma \ref{301}}\label{proof_lem_301}
The formula from \cite[Eq. (22.2.38)]{mehta} implies that for $n=2m$:
	\[\cJ_{2m}(u) = \frac{ \sqrt{\pi}^{\, m}\, \left[\prod_{i=1}^{m-1} (2i)!\right]}{2^{\,m(m+1)} \; \prod_{i=1}^{2m}\Gamma\left(\tfrac{i}{2}\right)} \, H_{2m}(u).\]
For odd $n$ a similar but more involved formula can found in \cite[Eq. (22.2.39)]{mehta}. However, we decided not to put it here, because we use the symmetry argument in \cref{exp_J0}.

By \cref{expectation_hermite1} (1) we have $\mean_{u\sim N(0,\sigma^2)} H_{{2m}}( u)=\frac{(2m)!}{m!} (2\sigma^2-1)^m$. Thus,
\[\mean\limits_{u\sim N(0,\sigma^2)} H_{{2m}}( u)=\frac{(2m)!}{m!(p-1)^m} \;\text{ for }\; \sigma^2= \frac{p}{2(p-1)}.\]
This implies
	\begin{align*}
	\frac{\sqrt{\pi}\,(p-1)^{m}}{\Gamma(\tfrac{2m+1}{2})}\;\mean\limits_{u\sim N(0,\sigma^2)} \cJ_{2m}(u)
	&= \frac{\sqrt{\pi}\,(p-1)^{m}}{\Gamma(\tfrac{2m+1}{2})}\frac{ \sqrt{\pi}^{\, m}\, \left[\prod_{i=1}^{m-1} (2i)!\right]}{2^{\,m(m+1)} \; \prod_{i=1}^{2m}\Gamma\left(\tfrac{i}{2}\right)}\frac{(2m)!}{m!(p-1)^m}\\
	&= \frac{ \sqrt{\pi}^{\, m+1}\, \left[\prod_{i=1}^{m-1} (2i)!\right]}{2^{\,m(m+1)} \; \prod_{i=1}^n\Gamma\left(\tfrac{i}{2}\right)}\frac{(2m)!}{m!} =1
	\end{align*}
the last equality by \cref{301.1} (2). \qed

\subsubsection{Proof of Lemma \ref{lemma10}}\label{proof_lem_10}
From \cref{expectation_hermite1} (2) we get for all $1\leq i\leq m$ that
\begin{align}\label{505}
 &\mean\limits_{u\sim N(0,\sigma^2)} P_{{2i-1}}(u)P_{{2j-1}}(u)\,e^{-\tfrac{u^2}{2}}\\
 =&\frac{(-1)^{i+j-1}\; 2^{i+j-1}\, \Gamma\left(i+j-1+\frac{1}{2}\right)}{\, \sqrt{\pi} \;(\sigma^2+1)^{i+j-1+\tfrac{1}{2}}}  F\left(1-2i,1-2j;\tfrac{1}{2}-i-j+1;\tfrac{3p-2}{4(p-1)}\right)\nonumber\\
  = &\frac{(-1)^{i+j-1} 4^{i+j}\, \Gamma\left(i+j-\frac{1}{2}\right)\,}{2\sqrt{2\pi}} \; \left(\frac{p-1}{3p-2}\right)^{i+j-\tfrac{1}{2}}  F\left(1-2i,1-2j;\tfrac{3}{2}-i-j;\tfrac{3p-2}{4(p-1)}\right),\nonumber
 \end{align}
and
\begin{align}\label{502}
&\mean\limits_{u\sim N(0,\sigma^2)} P_{{2i}}(u)P_{{2j-2}}(u)\,e^{-\tfrac{u^2}{2}}\\
 =&\frac{(-1)^{i+j-1}\; 2^{i+j-1}\, \Gamma\left(i+j-1+\frac{1}{2}\right)}{\, \sqrt{\pi} \;(\sigma^2+1)^{i+j-1+\tfrac{1}{2}}}  F\left(-2i,2-2j;\tfrac{1}{2}-i-j+1;\tfrac{3p-2}{4(p-1)}\right)\nonumber\\
  = &\frac{(-1)^{i+j-1} 4^{i+j}\, \Gamma\left(i+j-\frac{1}{2}\right)\,}{2\sqrt{2\pi}} \; \left(\frac{p-1}{3p-2}\right)^{i+j-\tfrac{1}{2}}  F\left(-2i,2-2j;\tfrac{3}{2}-i-j;\tfrac{3p-2}{4(p-1)}\right).\nonumber
 \end{align}
By \cref{lemma_hypergeom} we have
	\begin{align*}
	&F\left(1-2i,1-2j,\tfrac{3}{2}-i-j,\tfrac{3p-2}{4(p-1)}\right)- F\left(2-2i,-2j,\tfrac{3}{2}-i-j,\tfrac{3p-2}{4(p-1)}\right)\\
	=\;&2\,\frac{3p-2}{4(p-1)}\,\frac{1-2i+2j}{3-2i-2j} F\left(2-2i,1-2j,\tfrac{5}{2}-i-j,\tfrac{3p-2}{4(p-1)}\right).
	\end{align*}
This shows that
	\begin{align*}
	 &\mean\limits_{u\sim N(0,\sigma^2)}e^{-\tfrac{u^2}{2}}\det\begin{bmatrix} P_{2i-1}(u) & P_{2j}(u) \\ P_{2i-2}(u) & P_{2j-1}(u) \end{bmatrix}\\
	 =&\mean\limits_{u\sim N(0,\sigma^2)}e^{-\tfrac{u^2}{2}}\,(P_{2i-1}(u)P_{2j-1}(u) - P_{2i-2}(u)P_{2j}(u))\\
	=&  \frac{\Gamma\left(i+j-\frac{1}{2}\right)}{\sqrt{2\pi}\,\tfrac{3-2i-2j}{1-2i+2j}\,\left(-\tfrac{3p-2}{4(p-1)}\right)^{i+j-1}} \, \sqrt{\frac{3p-2}{p-1}}\; F\left(2-2i,1-2j,\tfrac{5}{2}-i-j,\tfrac{3p-2}{4(p-1)}\right)\\
	=&\frac{1}{\sqrt{2\pi}}\,\sqrt{\frac{3p-2}{p-1}}\,g_1^{i-1,j}(p),
	 \end{align*}
	which finishes the proof. \qed

\subsubsection{Proof of Lemma \ref{lemma3}}\label{proof_lem_3}
First, we prove the case $j>0$: writing
\begin{align*}
\det\begin{bmatrix} P_{2j}(u) &   P_{2i+1}(u)\\   P_{2j-1}(u) & P_{2i}(u)\end{bmatrix} = \det\begin{bmatrix} P_{2j}(u) &   P_{2(i+1)-1}(u)\\   P_{2j-1}(u) & P_{2(i+1)-2}(u)\end{bmatrix}
\end{align*}
we see that \cref{lemma10} implies
\begin{align*}
\mean\limits_{u\sim N(0,\sigma^2)}e^\frac{-u^2}{2}\det\begin{bmatrix} P_{2j}(u) &   P_{2i+1}(u)\\   P_{2j-1}(u) & P_{2i}(u)\end{bmatrix} &= \frac{1}{\sqrt{2\pi}} \,\sqrt{\frac{3p-2}{p-1}}\, g_{(i+1)-1,j}(p)\\
&=\frac{1}{\sqrt{2\pi}} \,\sqrt{\frac{3p-2}{p-1}}\, g_{i,j}(p).
\end{align*}
Now we prove the case $j=0$. By \cref{502} we have
	\begin{equation*}
\mean\limits_{u\sim N(0,\sigma^2)} P_{0}(u)P_{2j}(u) e^{-\tfrac{u^2}{2}}=\frac{(-1)^{j} 4^{j+1} \Gamma\left(j+\frac{1}{2}\right)\left(\frac{p-1}{3p-2}\right)^{j+\tfrac{1}{2}}}{2\sqrt{2\pi}} =\frac{ (-1)\,\Gamma\left(j+\frac{1}{2}\right)}{2\sqrt{2\pi}\left(-\frac{3p-2}{4(p-1)}\right)^{j+1}} \, \sqrt{\frac{3p-2}{p-1}}.\label{204}
\end{equation*}
Moreover, by \cref{expectation_hermite2} (2) we have
\begin{align*}
\nonumber\mean\limits_{u\sim N(0,\sigma^2)} P_{-1}(u)P_{2j+1}(u) e^{-\tfrac{u^2}{2}} &= \frac{(-1)^{j+1} (2j+1)!}{2^{j}\,j!}\,\frac{(1-\sigma^2)^j\sigma^{2}}{\sqrt{1+\sigma^{2}} }\;F\left(-j,\tfrac{1}{2},\tfrac{3}{2},\tfrac{\sigma^4}{\sigma^4-1}\right)\\
\nonumber&= \frac{(-1)^{j+1} (2j+1)!}{2^{j}\,j!}\,\frac{(\tfrac{p-2}{2(p-1)})^j (\tfrac{p}{2(p-1)})}{\sqrt{\tfrac{3p-2}{2(p-1)}} }\;F\left(-j,\tfrac{1}{2},\tfrac{3}{2},\tfrac{-p^2}{(3p-2)(p-2)}\right)\\
&= \frac{(-1)^{j+1} (2j+1)!}{2^{2j}\,\sqrt{2}\,j!}\,\frac{(p-2)^jp\,\sqrt{\frac{3p-2}{p-1}}}{(p-1)^{j}(3p-2)}  \;F\left(-j,\tfrac{1}{2},\tfrac{3}{2},\tfrac{-p^2}{(3p-2)(p-2)}\right).
\end{align*}
Combining the previous two equations we see that $\mean_{u\sim N(0,\sigma^2)}e^\frac{-u^2}{2}\det\begin{bmatrix} P_{0}(u) & P_{2j+1}(u) \\ P_{-1}(u) & P_{2j}(u) \end{bmatrix}$ equals
\begin{align*}
\sqrt{\frac{3p-2}{p-1}}\left[\frac{(-1)^{j} (2j+1)!}{2^{2j}\,\sqrt{2}\,j!}\,\frac{(p-2)^jp}{(p-1)^{j}(3p-2)} \;F\left(-j,\tfrac{1}{2},\tfrac{3}{2},\tfrac{-p^2}{(3p-2)(p-2)}\right) - \frac{ \Gamma\left(j+\frac{1}{2}\right)}{2\sqrt{2\pi}\left(-\frac{3p-2}{4(p-1)}\right)^{j+1}} \right],
\end{align*}
which is equal to $\tfrac{1}{\sqrt{2\pi}}\sqrt{\tfrac{3p-2}{p-1}}\,g_{i,0}(p)$.
This finishes the proof.\qed
\section{Computation of integrals}\label{sec:integrals}
This section is dedicated to the computation of the integrals that appear in the next sections. We start with a general lemma that we will later apply to equation \cref{7} and equation~\cref{602}.
\begin{lemma}\label{auxiliary_lemma}
Let $f:\HR^m\times\HR\to \HR$, $(x,u)\mapsto f(x,u)$ be a measurable function, such that $\int_{\HR^m\times \HR^k} |f(x,u)| \, \d x \d u <\infty$. Assume that $f(x,u)$ is invariant under any permutations of the entries of $x=(x_1,\ldots,x_m)$. Then
	\[\sum_{j=0}^{m} \binom{m}{j}\,\int_{\substack{x_1,\ldots, x_j \leq u\\u\leq x_{j+1},\ldots,x_{m}} \; } f(x_1,\ldots,x_m,u)\;\d x_1\ldots  \d x_m = \int_{x_1,\ldots, x_m\in \HR} f(x_1,\ldots,x_m,u)\;\d x_1\ldots  \d x_m\]
for all $u\in \HR$.
\end{lemma}
\begin{proof}
We prove the statement by induction. For $m=1$ we have
	\begin{equation*}
	\int_{x_1\leq u} f(x_1,u)\;\d x_1 + \int_{ u \leq x_1} f(x_1,u)\;\d x_1=\int_{x_1\in\HR} f(x_1,u)\;\d x_1.
	\end{equation*}
For $m>1$ we write
	\begin{equation*}
	g_j(u):=\int_{\stackrel{x_1,\ldots, x_j \leq u}{u\leq x_{j+1},\ldots,x_{m}} \; } f(x_1,\ldots,x_m,u)\;\d x_1\ldots  \d x_m, \quad j=0,\ldots,m
	\end{equation*}
Using $\binom{m}{j}=\binom{m-1}{j}+\binom{m-1}{j-1}$ \cite[6:5:3]{atlas} we have
	\begin{equation}\label{501}
	\sum_{j=0}^m \binom{m}{j} g_j(u) = \sum_{j=0}^{m-1} \binom{m-1}{j}  (g_j(u)+g_{j+1}(u)),
	\end{equation}
where
\begin{equation*}
g_j(u)+g_{j+1}(u) = \int_{\substack{x_1,\ldots, x_j \leq u\\u\leq x_{j+2},\ldots,x_{m}}} \; \int_{x_{j+1}=-\infty}^\infty\, f(x_1,\ldots,x_m,u) \d x_1\ldots  \d x_m.
\end{equation*}
By assumption $f$ is invariant under any permutations of the $x_i$. Thus, making a change of variables that interchanges $x_{j+1}$ and $x_m$ we see that
\begin{equation*}
g_j(u)+g_{j+1}(u) = \int_{x_{m}=-\infty}^\infty \left[\; \int_{\substack{x_1,\ldots, x_j \leq u\\u\leq x_{j+1},\ldots,x_{m-1}}} \;  f(x_1,\ldots,x_{m-1},x_m,u) \d x_1\ldots  \d x_{m-1}\right] \d x_m.
\end{equation*}
Plugging this into \cref{501}, and using Fubini's theorem to interchange summation and integration we see that $\sum_{j=0}^{m} \binom{m}{j}\, g_j(u)$ is equal to
	\[ \int_{x_{m}=-\infty}^\infty\left[\sum_{j=0}^{m-1} \binom{m-1}{j}  \int_{\substack{x_1,\ldots, x_j \leq u\\u\leq x_{j+1},\ldots,x_{m-1}}} \;  f(x_1,\ldots,x_{m-1},x_m,u) \d x_1\ldots  \d x_{m-1}\right]\d x_m.\]
We can now apply the induction hypothesis to the inner integral and conclude the proof.
\end{proof}
Recall from \cref{my_polynomials} the definition of the polynomials $P_k(x)$ and from \cref{G} the definition of $G_k(x)$. The following technical lemma is the key to prove \cref{prop1} and \cref{prop2} below.
\begin{lemma}\label{prop0}
Let $\cA$ denote the $(2m+1)\times (2m-2)$ matrix
	\begin{equation*}
\cA = \begin{bmatrix}  G_{i}(x_1) & P_{i}(x_1) & \ldots & G_{i}(x_{m-1}) & P_{i} (x_{m-1}) \end{bmatrix}_{0\leq i \leq 2m}.
\end{equation*}
For a subset $S\subset \{0,\ldots,2m\}$ with three elements let $\cA^{S}$ be the $2(m-1)\times 2(m-1)$-matrix that is obtained by removing from $\cA$ all the rows indexed by $S$. We write
	\begin{equation}\label{xi}
	\Xi(S):=\int_{x_1,\ldots,x_{m-1}\in\HR } \det(\cA^S)\; e^{-\sum\limits_{i=1}^{m-1} \tfrac{x_i^2}{2}}\;\d x_1\ldots  \d x_{m-1}.
	\end{equation}
Then, we have
\begin{enumerate}
\item
Let $S_0\subset \set{1,\ldots,2m}$ be a subset with $2$ elements and $S=S_0\cup \set{0}$. Then
\begin{align*}
\Xi(S)
=&\begin{cases}  (m-1)! \,2^{m-1}\,\det(\Gamma_1^{a,b}), & \text{if } S=\set{0,2a-1,2b} \text{ and } a\leq b\\
-(m-1)! \,2^{m-1}\,\det(\Gamma_1^{a,b}), & \text{if } S=\set{0, 2a-1,2b} \text{ and } a> b\\
0,&\text{if } S \text{ has any other form.}
\end{cases}
\end{align*}
\item Let $S_0\subset \set{0,\ldots,2m-1}$ be a subset with $2$ elements and $S=S_0\cup\set{2m}$. Then
\begin{align*}
\Xi(S)
=&\begin{cases}  (m-1)! \,2^{m-1}\,\det(\Gamma_2^{a,b}), & \text{if } S=\set{2a,2b+1,2m} \text{ and } a\leq b\\
-(m-1)! \,2^{m-1}\det(\Gamma_2^{a,b}), & \text{if } S=\set{2a,2b+1,2m} \text{ and } a> b\\
0,&\text{if } S \text{ has any other form.}
\end{cases}
\end{align*}
\end{enumerate}
Here, $\Gamma_1^{a,b}$ and $\Gamma_2^{a,b}$ are the matrices from \cref{gamma_matrices}.
\end{lemma}
\begin{proof}
We first prove (1). Fix $S_0\subset \set{1,\ldots,2m}$ and $S=S_0\cup\set{0}$. Then
	\begin{equation}\label{99}
	\cA^S= \begin{bmatrix}  G_{i}(x_1) & P_{i}(x_1) & \ldots & G_{i}(x_{m}) & P_{i}(x_{m-1}) \end{bmatrix}_{1\leq i \leq 2m, i\not \in  S_0}.
	\end{equation}
To ease notation put
	\begin{equation}\label{m_dash}
	\mu:=m-1.
	\end{equation}
Furthermore, let us denote the elements in $\set{1,\ldots,2m}\backslash S_0$ in ascending order by $s_1<\ldots < s_{2\mu}$ and let $\kS_{2\mu}$ denote the group of permutations on $\set{1,\ldots,2\mu}$. Expanding the determinant of $\cA^S$ yields
	\begin{equation}\label{det}
	\det(\cA^S) = \sum_{\pi \in \kS_{2\mu}} \mathrm{sgn}(\pi) \, \prod_{i=1}^{\mu} G_{s_{\pi(2i-1)}}(x_i) \, P_{s_{\pi(2i)}}(x_i).
	\end{equation}
Recall from \cref{inner_product} the definition of $\langle \_,\_\rangle$. Plugging \cref{det} into \cref{xi} and integrating over all the~$x_i$ we see that
\begin{equation}\label{a3}
\Xi(S)=\sum_{\pi \in \kS_{2\mu}} \mathrm{sgn}(\pi) \, \prod_{i=1}^{\mu} \langle G_{s_{\pi(2i-1)}}(x), P_{s_{\pi(2i)}}(x) \rangle.
\end{equation}
From \cref{lemma2} we know that $\langle G_{k}(x), P_{\ell}(x) \rangle=0$ whenever $k+\ell$ is even and $k\neq \ell$. This already proves that $\Xi(S)=0$, if $S$ is not of the form $S=\set{0,2a-1,2b}$, because in this case we can't have a partition of $\set{1,\ldots,2m}\backslash S$ into pairs of numbers where one number is even and the other is odd. If, on the other hand, $S=\set{0,2a-1,2b}$ does contain one odd and two even elements, in~\cref{a3} we may as well sum over the subset
	\[\kO_{2\mu}:= \cset{\pi\in\kS_{2\mu}}{\forall i\in \set{1,\ldots,\mu}: s_{\pi(2i-1)} + s_{\pi(2i)} \text{ is odd} }.\]
Let $\kT\subset\kS_{2\mu}$ be the subgroup generated by the set of transpositions $\set{(1\; 2), (3\; 4), \ldots , ((2\mu-1)\; 2\mu)}$. We define an equivalence relation on $\kO_{2\mu}$ via:
\[\forall \pi,\sigma\in \kO_{2\mu}: \quad \pi\sim\sigma \;:\Leftrightarrow \; \exists \tau \in \kT: \pi = \sigma \tau.\]
Note that the multiplication with $\tau$ from the right is crucial here. Let $\kC:=\kO_{2\mu} /\sim$ denote the set of equivalence classes of $\kT$ in $\kO_{2\mu}$. A set of representatives for $\kC$ is
	\[\kR:=\cset{\pi \in \kS_{2\mu}}{\forall i \in \set{1,\ldots,\mu}: s_{\pi(2i-1)} \text{ is odd and } s_{\pi(2i)}\text{ is even}}.\]
Making a partition of $\kO_{2\mu}$ into the equivalence classes of $\sim$ in \cref{a3} we get
\begin{equation*}
\Xi(S)=\sum_{\pi\in \kR}\sum_{\tau\in \kT} \mathrm{sgn}(\pi\circ\tau)\, \prod_{i=1}^\mu \langle G_{s_{\pi\circ\tau(2i-1)}}(x), P_{s_{\pi\circ\tau(2i)}}(x) \rangle.
\end{equation*}
For a fixed $\pi\in\kR$ and all $\tau \in \kT$, by \cref{orthogonal_rel} (1) we have
\[\prod_{i=1}^\mu \langle G_{s_{\pi\circ\tau(2i-1)}}(x), P_{s_{\pi\circ\tau(2i)}}(x) \rangle = \mathrm{sgn}(\tau) \prod_{i=1}^\mu \langle G_{s_{\pi(2i-1)}}(x), P_{s_{\pi(2i)}}(x) \rangle\]
so that
\begin{equation*}
\Xi(S)=2^\mu \sum_{\pi\in \kR}\mathrm{sgn}(\pi)\, \prod_{i=1}^\mu \langle G_{s_{\pi(2i-1)}}(x), P_{s_{\pi(2i)}}(x) \rangle.
\end{equation*}
Let us investigate $\kR$ further. We denote the group of permutation on $\set{1,\ldots,\mu}$ by $\kS_\mu$. The group $\kS_\mu\times\kS_\mu$ acts transitively and faithful on $\kR$ via
	\[\forall i: \left((\sigma_1,\sigma_2).\pi\right)(2i-1) := \pi(2\sigma_1(i)-1)\quad\text{and}\quad \left((\sigma_1,\sigma_2).\pi\right)(2i) := \pi(2\sigma_2(i))\]
This shows that that we have a bijection $\kS_\mu\times\kS_\mu\to \kR,\; (\sigma_1,\sigma_2) \mapsto (\sigma_1,\sigma_2).\pi_\star$ where $\pi_\star \in \kR$ is fixed. Moreover, for all $(\sigma_1,\sigma_2)\in\kS_\mu\times \kS_\mu$ we have $\mathrm{sign}((\sigma_1,\sigma_2).\pi_\star)=\mathrm{sgn}(\sigma_1)\mathrm{sgn}(\sigma_2)\mathrm{sign}(\pi_\star)$.

Let us denote $2k_i-1=s_{\pi_\star(2i-1)}$ and $2\ell_i=s_{\pi_\star(2i)}$. We choose $\pi_\star$ uniquely by requiring $k_1<k_2<\ldots <k_\mu$ and $\ell_1<\ell_2<\ldots<\ell_\mu$. By doing so we get
\begin{align}
\label{x}\Xi(S)=&2^\mu \mathrm{sgn}(\pi_\star)\sum_{(\sigma_1,\sigma_2)\in\kS_\mu\times\kS_\mu}\mathrm{sgn}(\sigma_1)\mathrm{sgn}(\sigma_2)\, \prod_{i=1}^\mu \langle G_{2k_{\sigma_1(i)}-1}(x), P_{2\ell_{\sigma_2(i)}}(x) \rangle\\
=&2^\mu \mu!\, \mathrm{sgn}(\pi_\star)\sum_{\sigma\in\kS_\mu}\mathrm{sgn}(\sigma)\, \prod_{i=1}^\mu \langle G_{2k_{\sigma(i)}-1}(x), P_{2\ell_{i}}(x) \rangle.\nonumber\\
=&2^\mu \mu! \,\mathrm{sgn}(\pi_\star)\sum_{\sigma\in\kS_\mu}\mathrm{sgn}(\sigma)\,\prod_{i=1}^\mu (-1)^{k_{\sigma(i)}+\ell_i} \, \Gamma\left(k_{\sigma(i)}+\ell_i-\tfrac{1}{2}\right) \nonumber
\end{align}
the last line by \cref{orthogonal_rel} (2). By construction we have $\bigcup_{i=1}^\mu\set{2k_i-1,2\ell_i}=\set{1,\ldots,2m}\backslash S_0$, so that
	\[\set{k_1,\ldots, k_\mu} = \set{1,\ldots,m}\backslash \set{a}, \text{ and } \set{\ell_1,\ldots, \ell_\mu} = \set{1,\ldots,m}\backslash \set{b}.\]
Hence, for all $\sigma\in\kS_\mu$ we have
	\begin{equation}\label{x2}
	\prod_{i=1}^\mu (-1)^{k_{\sigma(i)}+\ell_i} =(-1)^{m(m+1)-a-b} = (-1)^{a+b}.
	\end{equation}
and, furthermore,
	\begin{equation}
	\label{x4}
	\mathrm{sgn}(\pi_\star)=\begin{cases} (-1)^{a+b},& \text{if } a\leq b \\ (-1)^{a+b-1}, &\text{if } a>b\end{cases}.
	\end{equation}
Moreover,
	\begin{equation}\label{x3}
	\sum_{\sigma\in\kS_\mu}\mathrm{sgn}(\sigma)\,\prod_{i=1}^\mu\Gamma\left(k_{\sigma(i)}+\ell_i-\tfrac{1}{2}\right) = \det\left(\left[\Gamma\left(k+\ell-\tfrac{1}{2}\right)\right]_{\substack{1\leq k\leq m, k\neq a,\\1\leq \ell\leq m, \ell\neq b.}}\right) = \det(\Gamma_1^{a,b}),
	\end{equation}
Putting together \cref{x}, \cref{x4}, \cref{x2} and \cref{x3} proves (1).

We now prove (2). Fix $S_0\subset\set{0,\ldots,2m-1}$ and let $S=S_0\cup\{2m\}$. Similar to \cref{99} we have
	\begin{equation*}
	\cA^S= \begin{bmatrix}  G_{i}(x_1) & P_{i}(x_1) & \ldots & G_{i}(x_{m}) & P_{i}(x_{m}) \end{bmatrix}_{0\leq i \leq 2m-1, i\not\in S_0}.
	\end{equation*}
Put $\widetilde{G}_i(x):=G_{i-1}(x)$ and $\widetilde{P}_i(x):=P_{i-1}(x)$, so that
	\begin{equation*}
	\cA^S= \begin{bmatrix}  \widetilde{G}_{i}(x_1) & \widetilde{P}_{i}(x_1) & \ldots & \widetilde{G}_{i}(x_{m}) & \widetilde{P}_{i}(x_{m}) \end{bmatrix}_{1\leq i \leq 2m, i\not\in \widetilde{S}_0},
	\end{equation*}
where $\widetilde{S}_0$ is the set that is obtained from $S_0$ by adding $1$ to both elements of $S_0$ and $\widetilde{S}=\widetilde{S}_0\cup\{0\}$.
We now proceed as in the proof of (1) until \cref{x}, and conclude that
	\[\Xi(S)=\begin{cases}  (m-1)! \,2^{m-1}\,\det(\,\widetilde{\Gamma}_2^{a',b'}\,), & \text{if } \widetilde{S}=\set{0, 2a'-1,2b'} \text{ and } a\leq b\\
-(m-1)! \,2^{m-1}\,\det(\widetilde{\Gamma}_2^{a',b'}), & \text{if } \widetilde{S}=\set{0, 2a'-1,2b'} \text{ and } a> b\\
0,&\text{if } \widetilde{S} \text{ has any other form.}
\end{cases},\]
where
\[\widetilde{\Gamma}_2^{a',b'}:=\left[\Gamma\left(k+\ell-\tfrac{3}{2}\right)\right]_{\substack{1\leq k\leq m, k\neq a'\\1\leq \ell\leq m, \ell\neq b'}}= \Gamma_2^{a'-1,b'-1}.\]
If $\widetilde{S}=\set{0,2a'-1,2b'}$ then, by definition, $S=\set{2a,2b+1,2m}$, where $a=a'-1$ and $b=b'-1$. Hence,
	\begin{align*}
	&\Xi(s)
	=\begin{cases}  (m-1)! \,2^{m-1}\,\det({\Gamma}_2^{a,b}), & \text{if } {S}=\set{2a,2b+1,2m} \text{ and } a\leq b\\
-(m-1)! \,2^{m-1}\,\det({\Gamma}_2^{a,b}), & \text{if } {S}=\set{2a,2b+1,2m} \text{ and } a> b\\
0,&\text{if } {S} \text{ has any other form.}
\end{cases},\end{align*}
This finishes the proof.
\end{proof}
\cref{prop1} and \cref{prop2} below become important in \cref{sec_n_even} and \cref{sec_n_odd}, respectively.
\begin{prop}\label{prop1}
Recall from \cref{sec:orth_rel} the definition of $P_k(x)$ and~$G_k(x)$. Let $\cM$ denote the matrix
	\[\cM:=\begin{bmatrix} P_{i}(u) &  \begin{bmatrix}G_{i}(x_j) & P_{i}(x_j)\end{bmatrix}_{{j=1,\ldots,m-1 }}   &G_{i}(u)& G_i(\infty)\end{bmatrix}_{i=0,\ldots,2m}\]
We have
\begin{align*}&\int_{x_1,\ldots,x_{m-1} \in\HR} \det(\cM)\,  e^{-\sum\limits_{i=1}^{m-1} \tfrac{x_i^2}{2}}\,\d x_1\ldots \d x_{m-1}\\
= \;&\sqrt{2\pi} (m-1)! \,2^{\,m-1}\,e^{-\tfrac{u^2}{2}}\,\sum_{1\leq i,j \leq m} \det(\Gamma^{i,j}_1) \; \det\begin{bmatrix}  P_{2j}(u)&P_{2i-1}(u)  \\ P_{2j-1}(u)&P_{2i-2}(u)   \end{bmatrix}.
\end{align*}
where the matrix $\Gamma^{i,j}_1$ is defined as in \cref{gamma_matrices}.
\end{prop}
\begin{proof}
Let us denote the quantity that we want to compute by $\Theta$:
	\[\Theta:=\int_{x_1,\ldots,x_{m-1} \in\HR} \det (\cM)  e^{-\sum\limits_{i=1}^{m-1} \tfrac{x_i^2}{2}} \d x_1\ldots \d x_{m-1}.\]
A permutation with negative sign of the columns of $\cM$ yields,
\begin{equation}
\det(\cM)=- \det \begin{bmatrix}    G_i(\infty)&G_{i}(u) & P_{i}(u) & \begin{bmatrix}G_{i}(x_j) & P_{i}(x_j)\end{bmatrix}_{{j=1,\ldots,m-1 }} \end{bmatrix}_{i=0,\ldots,2m}\label{20}
\end{equation}
By \cref{lemma2} we have $G_{i}(\infty)=0$ for $i\geq 1$ and $G_0(\infty)=\sqrt{2\pi}$. Expanding the determinant in \cref{20} with Laplace expansion we get
	\[\det(\cM)=-\sqrt{2\pi} \sum_{1\leq k<\ell \leq 2m} (-1)^{k+\ell-1} (G_k(u)P_\ell(u)-G_\ell(u)P_k(u))\, \det(\cA^{k,\ell}),\]
where
$$\cA^{k,\ell}:=\begin{bmatrix}G_{i}(x_j) & P_{i}(x_j)\end{bmatrix}_{\substack{1\leq i\leq 2m, i\not\in\set{k,\ell}\\ 1\leq j\leq m-1\quad\quad\;}}.$$
In the notation of \cref{prop0} we have $\cA^{k,\ell}=\cA^{\set{0,k,\ell}}$ and
	$$\Xi(\set{0,k,\ell}) = \int_{x_1,\ldots,x_{m-1}\in\HR} \det(\cA^{k,\ell}) e^{-\sum\limits_{i=1}^{m-1} \tfrac{x_i^2}{2}} \d x_1 \ldots \d x_{m-1},$$
so that
\begin{equation}\label{a31.2}\Theta = \sqrt{2\pi} \sum_{1\leq k<\ell  \leq 2m} (-1)^{k+\ell}(G_k(u)P_\ell(u)-G_\ell(u)P_k(u)) \, \Xi(\set{0,k,\ell}).\end{equation}
Applying the \cref{prop0} yields
	\begin{equation*}
	  \Xi(\set{0,k,\ell})
	 =
\begin{cases}
(m-1)! 2^{m-1}  \det(\Gamma_1^{i,j}), &\text{ if } \set{k,\ell}=\set{2i-1,2j}, i\leq j.\\
 -(m-1)! 2^{m-1} \det(\Gamma_1^{i,j}), &\text{ if } \set{k,\ell}=\set{2i-1,2j}, i> j.\\
 0,& \text{ else.}\end{cases}
 \end{equation*}
When we want to plug this into \cref{a31.2} we have to incorporate that
\[\begin{cases} \text{If } k=2i-1,\ell=2j \text{ and } k<\ell, \text{ then } i\leq j.\\
\text{If } k=2j,\ell=2i-1 \text{ and } k<\ell, \text{ then } i>j.
\end{cases}\]
From this we get
\begin{align*}
	\Theta=&(-1)\,\sqrt{2\pi} (m-1)! \,2^{\,m-1}\,e^{-\tfrac{u^2}{2}}\,\Big[\sum_{1\leq i\leq j \leq m} \det(\Gamma^{i,j}_1) \; (G_{2i-1}(u)P_{2j}(u)-G_{2j}(u)P_{2i-1}(u))\\
	&\hspace{5cm}-\sum_{1\leq j < i \leq m} \det(\Gamma^{i,j}_1) \; (G_{2j}(u)P_{2i-1}(u)-G_{2i-1}(u)P_{2j}(u))\Big]
	\end{align*}
By \cref{lemma2} we have $G_k(u)=-e^{-\tfrac{u^2}{2}}  P_{k-1}(u)$, $k\geq 1$, which we plug into the upper expression:
	\begin{align*}
	\Theta=&\sqrt{2\pi} (m-1)! \,2^{\,m-1}\,e^{-\tfrac{u^2}{2}}\,\Big[\sum_{1\leq i\leq j \leq m} \det(\Gamma^{i,j}_1) \; (P_{2i-2}(u)P_{2j}(u)-P_{2j-1}(u)P_{2i-1}(u))\\
	&\hspace{4cm}-\sum_{1\leq j < i \leq m} \det(\Gamma^{i,j}_1) \; (P_{2j-1}(u)P_{2i-1}(u)-P_{2i-2}(u)P_{2j}(u))\Big]\\
=&	\sqrt{2\pi} (m-1)! \,2^{\,m-1}\,e^{-\tfrac{u^2}{2}}\,\sum_{1\leq i,j \leq m} \det(\Gamma^{i,j}_1) \; (P_{2i-2}(u)P_{2j}(u)-P_{2j-1}(u)P_{2i-1}(u))\\
=&	\sqrt{2\pi} (m-1)! \,2^{\,m-1}\,e^{-\tfrac{u^2}{2}}\,\sum_{1\leq i,j \leq m} \det(\Gamma^{i,j}_1) \; \det\begin{bmatrix}  P_{2j}(u)&P_{2i-1}(u)  \\ P_{2j-1}(u)&P_{2i-2}(u)   \end{bmatrix}
	\end{align*}
This finishes the proof.
\end{proof}
\begin{prop}\label{prop2}
Denote the matrix
$$\cM= \begin{bmatrix} P_{i}(u) & \begin{bmatrix}G_{i}(x_j) & P_{i}(x_j)\end{bmatrix}_{j=1,\ldots, m-1} & G_i(u)\end{bmatrix}_{i=0,\ldots,2m-1}.$$ Then
	\begin{align*}
	&\int_{x_1,\ldots,x_{m-1}\in\HR} \det(\cM) \, e^{-\sum\limits\limits_{i=1}^{m-1} \tfrac{x_i^2}{2}}\, \d x_1 \ldots \d x_{m-1}\\
	= &(m-1)! \,2^{m-1}\,e^{-\tfrac{u^2}{2}}\sum_{0\leq i,j\leq m-1}  \det(\Gamma_2^{i,j})\, \det\begin{bmatrix} P_{2i}(u) & P_{2j+1}(u) \\ P_{2i-1}(u) & P_{2j}(u) \end{bmatrix},
	\end{align*}
where the matrix $\Gamma_2^{i,j}$ is defined as in \cref{gamma_matrices}.
\end{prop}
\begin{proof}
The proof works similar as the the proof for \cref{prop1}: Again, we denote by $\Theta$ the quantity that we want to compute:
\[\Theta:=\int_{x_1,\ldots,x_m\in\HR} \det(\cM) \, e^{-\sum\limits_{i=1}^m \tfrac{x_i^2}{2}}\, \d x_1 \ldots \d x_m.\]
We have
	\[\det(\cM)=-\det \begin{bmatrix}  G_i(u)& P_{i}(u) &\begin{bmatrix}G_{i}(x_j) & P_{i}(x_j)\end{bmatrix}_{j=1,\ldots, m} \end{bmatrix}_{i=0,\ldots,2m-1}.\]
Expanding the determinant with Laplace expansion we get
	\[\det(\cM)= -\sum_{0\leq k < \ell \leq 2m-1} (-1)^{k+\ell-1} (G_k(u)P_\ell(u) - G_\ell(u)P_k(u)) \det(\cA^{k,\ell}),\]
where
	\[\cA^{k,\ell}=\begin{bmatrix}G_{i}(x_1) & P_{i}(x_1)& \ldots &G_{i}(x_m) & P_{j}(x_m)\end{bmatrix}_{i=0,\ldots,2m-1, i\not\in\set{k,\ell}}\]
In the notation \cref{prop0} of we have
	$$\Xi(\set{k,\ell,2m}) = \int_{x_1,\ldots,x_{m-1}\in\HR} \det(\cA^{k,\ell}) e^{-\sum\limits_{i=1}^{m-1} \tfrac{x_i^2}{2}} \d x_1 \ldots \d x_{m-1}.$$
Hence,
\begin{equation}\label{101}
\Theta=\sum_{0\leq k < \ell \leq 2m+1} (-1)^{k+\ell} (G_k(u)P_\ell(u) - G_\ell(u)P_k(u)) \,\Xi(\set{k,\ell,2m}).
\end{equation}
By \cref{prop0} (2) we have
	\begin{align*}
\Xi({k,\ell,2m})
=\begin{cases}  (m-1)! \,2^{m-1}\,\det(\Gamma_2^{i,j}), & \text{if } \set{k,\ell}=\set{2j,2i+1} \text{ and } j\leq i\\
-(m-1)! \,2^{m-1}\,\det(\Gamma_2^{i,j}), & \text{if } \set{k,\ell}=\set{2j,2i+1} \text{ and } j> i\\
0,&\text{else.}\end{cases}
 \end{align*}
When pluggin this into \cref{101} we must take into account that
\[\begin{cases} \text{If } k=2j,\ell=2i+1 \text{ and } k<\ell, \text{ then } j\leq i.\\
\text{If } k=2i+1,\ell=2j \text{ and } k<\ell, \text{ then } j > i.
\end{cases}\]
This yields
\begin{align*}
\Theta&=(m-1)! \,2^{m-1}\, \big[\sum_{0\leq i<j\leq m-1}  \det(\Gamma_2^{i,j})\, (G_{2i+1}(u)P_{2j}(u) - G_{2j}(u)P_{2i+1}(u) ) \\
 &\hspace{2cm}- \sum_{0\leq j\leq i\leq m-1}  \det(\Gamma_2^{i,j})\, (G_{2j}(u)P_{2i+1}(u) - G_{2i+1}(u)P_{2j}(u))\big]\\
 &=m! \,2^{m-1}\,\sum_{0\leq i,j\leq m-1}  \det(\Gamma_2^{i,j})\, (P_{2j}(u)G_{2i+1}(u) - P_{2i+1}(u)G_{2j}(u))
\end{align*}
Using from \cref{lemma2} that $G_k(u)=-e^{-\tfrac{u^2}{2}}  P_{k-1}(u)$, we finally obtain
	\begin{align*}
	\Theta=(m-1)! \,2^{m-1}\,e^{-\tfrac{u^2}{2}}\,\sum_{0\leq i,j\leq m-1}  \det(\Gamma_2^{i,j})\, \det\begin{bmatrix}   P_{2i+1}(u)&P_{2j}(u) \\  P_{2i}(u) & P_{2j-1}(u) \end{bmatrix}.
	\end{align*}
This finishes the proof.
\end{proof}
\section{Proof of Theorem \ref{thm}}\label{sec:111}
The proof of Theorem \ref{thm} is an adaption of the computations in \cite[Sec. 22]{mehta}. Recall from~\cref{def_I_J} that we have put
\begin{equation*}
\cI_n(u):=\mean\limits_{A\sim \mathrm{GOE}(n)} \lvert \det(A-uI_n)\rvert,\quad\text{and}\quad \cJ_n(u):=\mean\limits_{A\sim \mathrm{GOE}(n)}\det(A-uI_n),
\end{equation*}
The proof of \cref{thm} is based on the idea to decompose $\cI_n(u)=(\cI_n(u)+\cJ_n(u)) - \cJ_n(u)$ and then to compute the two summands $\cI_n(u)+\cJ_n(u)$ and $\cJ_n(u)$ separately. By definition of the Gaussian Orthogonal Ensemble we have
\begin{equation*}
\cI_n(u)
=\frac{1}{\sqrt{2}^{\,n} \sqrt{\pi}^{\,n(n+1)/2}}\;  \int_{A \in S^2(\HR^n)}\, \lvert\det(A-uI_{n})\rvert \; e^{-\tfrac{1}{2}\,\Vert A\Vert^2_F} \,\d A.
\end{equation*}
By \cite[Theorem 3.2.17]{muirhead}, the density of the (ordered) eigenvalues $\lambda_1\leq\ldots\leq \lambda_n$ of $A\sim \mathrm{GOE}(n)$ is given by
	\[\frac{\sqrt{\pi}^{\tfrac{n(n+1)}{2}}}{ \prod_{i=1}^n\Gamma\left(\tfrac{i}{2}\right)}\; \Delta(\lambda) \,e^{-\sum\limits_{i=1}^{n} \tfrac{\lambda_i^2}{2} }  \,\mathbf{1}_{\set{\lambda_1\leq \ldots\leq \lambda_n}},\]
where $\Delta(\lambda):= \prod_{1\leq i < j \leq n} (\lambda_j - \lambda_i)$ and $\mathbf{1}_{\set{\lambda_1\leq \ldots\leq \lambda_n}}$ is the characteristic function of the set $\set{\lambda_1\leq \ldots\leq \lambda_n}$. This implies
	\begin{equation}\label{I_n2}
	\cI_n(u) =  \frac{1}{\sqrt{2}^{\,n} \prod_{i=1}^n\Gamma\left(\tfrac{i}{2}\right)}\; \int_{\lambda_1\leq ...\leq \lambda_{n}} \Delta(\lambda) \,e^{-\sum\limits_{i=1}^{n} \tfrac{\lambda_i^2}{2} } \;\prod_{i=1}^n \lvert\lambda_i -u\rvert\;\d \lambda_1 \ldots \d \lambda_{n}.
	\end{equation}
Similiarly,
	\begin{equation}\label{J_n}
	\cJ_n(u)=  \frac{1}{\sqrt{2}^{\,n} \; \prod_{i=1}^n\Gamma\left(\tfrac{i}{2}\right)}\;  \int_{\lambda_1\leq ...\leq \lambda_{n}}\, \Delta(\lambda) \,e^{-\sum\limits_{i=1}^{n} \tfrac{\lambda_i^2}{2} }  \;\prod_{i=1}^n (\lambda_i -u )\;\d \lambda_1 \ldots \d \lambda_{n}.
	\end{equation}
In the remainder of the section we put $\lambda_0:=-\infty$ and
\begin{equation*}
c_n:= \frac{1}{\sqrt{2}^{\,n} \; \prod_{i=1}^n\Gamma\left(\tfrac{i}{2}\right)}.
\end{equation*}
Then, we can write \cref{I_n2} as
\[\cI_n(u)= c_n\,\sum_{j=0}^{n} (-1)^{j} \int_{\substack{\lambda_0\leq \lambda_1\leq ...\leq \lambda_{j} \leq u\\u\leq \lambda_{j+1}\leq \ldots \leq \lambda_n}}\,\Delta(\lambda)\, e^{-\sum\limits_{i=1}^{n} \tfrac{\lambda_i^2}{2} } \;\prod_{i=1}^n ( \lambda_i -u )\; \d \lambda_1 \ldots \d \lambda_{n}\]
and \cref{J_n} as
\[\cJ_n(u)=  c_n\,\sum_{j=0}^{n}\;\,\int_{\substack{\lambda_0\leq \lambda_1\leq ...\leq \lambda_{j} \leq u\\u\leq \lambda_{j+1}\leq \ldots \leq \lambda_n}}\, \Delta(\lambda)\, e^{-\sum\limits_{i=1}^{n} \tfrac{\lambda_i^2}{2} } \;\prod_{i=1}^n (\lambda_i -u)\; \d \lambda_1 \ldots \d \lambda_{n}.\]
Hence,
\begin{equation}
\cI_n(u)+\cJ_n(u)= 2c_n\;\sum_{j=0}^{\lfloor \tfrac{n}{2}\rfloor} \; \int_{\substack{\lambda_0\leq \lambda_1\leq ...\leq \lambda_{2j} \leq u\\u\leq \lambda_{2j+1}\leq \ldots \leq \lambda_n}} \Delta(\lambda) \, e^{-\sum\limits_{i=1}^{n} \tfrac{\lambda_i^2}{2} } \;\prod_{i=1}^n (\lambda_i -u)\; \d \lambda_1 \ldots \d \lambda_{n}\label{aaa}
\end{equation}
We write $ \Delta(\lambda)\prod_{i=1}^n (\lambda_i-u)$ as a Vandermonde determinant:
	\begin{equation*} \Delta(\lambda)\prod_{i=1}^n (\lambda_i-u)  =  \prod_{1\leq i<j\leq n} (\lambda_j  - \lambda_i) \; \prod_{i=1}^n (\lambda_i-u)= \det \begin{bmatrix}u^k & \lambda_1^{k} & \ldots & \lambda_{n}^{k} \end{bmatrix}_{k=0,\ldots, n}.
	\end{equation*}
Since we may add arbitrary multiple of rows to other rows of a matrix without changing its determinant, we have
\begin{equation}\label{b}
	 \Delta(\lambda)\prod_{i=1}^n (\lambda_i-u) = \det \begin{bmatrix} P_{k}(u) & P_{k}(\lambda_1) & \ldots & P_{k}(\lambda_{n})\end{bmatrix}_{k=0,\ldots, n},
\end{equation}
where the $P_{k}(x), k=0,1,\ldots,n,$ are the Hermite polynomials from \cref{my_polynomials}. Plugging this into~\cref{aaa} shows that $\cI_n(u)+\cJ_n(u)$ is equal to
\begin{align}
&\label{a}
2c_n\;\sum_{j=0}^{\lfloor \tfrac{n}{2}\rfloor} \; \int_{\substack{\lambda_0\leq \lambda_1\leq ...\leq \lambda_{2j} \leq u\\u\leq \lambda_{2j+1}\leq \ldots \leq \lambda_n}} \det \begin{bmatrix} P_{k}(u) & P_{k}(\lambda_1) & \ldots & P_{k}(\lambda_{n})\end{bmatrix}_{k=0,\ldots, n} e^{-\sum\limits_{i=1}^{n} \tfrac{\lambda_i^2}{2} } \, \d \lambda_1 \ldots \d \lambda_{n}
\end{align}
We now distinguish the cases $n$ even and $n$ odd.

\subsection{The case when $n=2m$ is even} \label{sec_n_even}
Recall from \cref{G} that we have put
\[G_{k}(x)=\int_{-\infty}^x P_{k}(y)\, e^{-\tfrac{y^2}{2}} \;\d y\]
Observe that each $\lambda_i$ appears in exactly one column on the right hand side of \cref{b}. Integrating over $\lambda_1,\lambda_3,\lambda_5,\ldots$ in \cref{a} therefore yields
	\begin{equation}
\cI_{2m}(u)+\cJ_{2m}(u)= 2c_{2m}\sum_{j=0}^{m} \;  \int_{\substack{ \lambda_2\leq \lambda_4\leq ...\leq \lambda_{2j} \leq u\\u\leq \lambda_{2j+2}\leq \ldots \leq \lambda_{2m}}}\det(\cX_j) \; e^{-\sum\limits_{i=1}^{m} \frac{\lambda_{2i}^2}{2} }\,\d \lambda_2  \ldots \d \lambda_{2m}
\end{equation}
where $\cX_j$ is the matrix
\begin{align*}
\cX_j=&\Big[ P_{k}(u)\;  \begin{bmatrix}G_{k}(\lambda_{2i}) - G_k(\lambda_{2i-2}) & P_{k}(\lambda_{2i})\end{bmatrix}_{{i=1,\ldots j}}\;  \begin{bmatrix}G_{k}(\lambda_{2j+2}) - G_k(u) & P_{k}(\lambda_{2j+2})\end{bmatrix}\,\ldots  \\
&\hspace{1cm} \ldots\, \begin{bmatrix}G_{k}(\lambda_{2i}) - G_{k}(\lambda_{2i-2}) & P_{k}(\lambda_{2i})\end{bmatrix}_{{i=j+2,\ldots, m}}\Big]_{k=0,\ldots,2m}\in\HR^{(2m+1)\times (2m+1)}.
\end{align*}
Adding the first column of $\cX_j$ to its third column, and the result to the fifth column and so on, does not change the value of the determinant. Hence, $\det(\cX_j)=\det(\cY_j)$, where
\begin{equation*}
\cY_j:=\begin{bmatrix} P_{k}(u)& \begin{bmatrix}G_{k}(\lambda_{2i}) & P_{k}(\lambda_{2i})\end{bmatrix}_{{i=1,\ldots j}} &  \begin{bmatrix}G_{k}(\lambda_{2i}) - G_{k}(u) & P_{k}(\lambda_{2i})\end{bmatrix}_{{i=j+1,\ldots, m}}\end{bmatrix}_{k=0,\ldots,2m}
\end{equation*}
Observe that each $\lambda_{2i}$ appears in exactly two columns of $\cY_j$. Hence, making a change of variables by interchanging $\lambda_{2i}$ and $\lambda_{2i'}$ for any two $i,i'$ does not change the value of the determinant of~$\cY_j$. Writing $x_i:=\lambda_{2i}$, for $1\leq i\leq m$,  we therefore have
		\begin{equation}\label{7}
\cI_{2m}(u)+\cJ_{2m}(u)= \frac{2c_{2m}}{m!}\sum_{j=0}^{m} \;\binom{m}{j} \;  \int_{\substack{x_1,\ldots, x_j \leq u\\u\leq x_{j+1}, \ldots, x_m}}\det(\cY_j) \; e^{-\sum\limits_{i=1}^{m} \tfrac{x_i^2}{2} } \,  \d x_1 \ldots \d x_{m},
\end{equation}
Using the multilinearity of the determinant we can write $\det(\cY_j)$ as a sum of determinants of matrices, each of which has as double colums either $\left[\begin{smallmatrix} G_{k}(x_i)& P_{k}(x_i)\end{smallmatrix}\right] $ or $\left[\begin{smallmatrix} -G_{k}(u)& P_{k}(x_i)\end{smallmatrix}\right] $. Observe that whenever the double column $\left[\begin{smallmatrix} -G_{k}(u)& P_{k}(x_i)\end{smallmatrix}\right] $ appears twice in a matrix the corresponding determinant equals zero. Moreover, we may interchange the double columns as we wish without changing the value of the determinant. All this yields
	\begin{equation}\label{8}
	\det(\cY_j) = \det(\cK)  - (m-j)\det(\cL),\end{equation}
where the matrices $\cK,\cL\in\HR^{(2m+1)\times (2m+1)}$ are defined as
\begin{align*}\cK&= \begin{bmatrix} P_{k}(u) & \begin{bmatrix}G_{k}(x_i) & P_{k}(x_i)\end{bmatrix}_{i=1,\ldots, m} \end{bmatrix}_{k=0,\ldots,2m},\\
\cL&= \begin{bmatrix}P_{k}(u) &  \begin{bmatrix}G_{k}(x_i) & P_{k}(x_i)\end{bmatrix}_{{i=1,\ldots,m-1 }} &   G_{k}(u) & P_{k}(x_{m})\end{bmatrix}_{k=0,\ldots,2m}.
\end{align*}
Note that $\cK \, e^{-\sum\limits_{i=1}^m\tfrac{x_i^2}{2}}$ is invariant under any permutations of the $x_i$. We may apply \cref{auxiliary_lemma} to conclude that
\begin{align}\label{50}
&\frac{2c_{2m}}{m!}\sum_{j=0}^{m} \binom{m}{j}   \int_{\stackrel{x_1,\ldots, x_j \leq u}{u\leq x_{j+1}, \ldots, x_m}}\det(\cK) \; e^{-\sum\limits_{i=1}^{m} \tfrac{x_i^2}{2} }   \d x_1 \ldots \d x_{m}\\
 \nonumber=\quad& \frac{2c_{2m}}{m!}\int_{x_1,\ldots,x_m\in\HR}\det(\cK) \; e^{-\sum\limits_{i=1}^{m} \tfrac{x_i^2}{2} }   \d x_1 \ldots \d x_{m}\\
\nonumber  =\quad&2c_{2m}\int_{x_1<\ldots<x_m\in\HR}\det(\cK) \; e^{-\sum\limits_{i=1}^{m} \tfrac{x_i^2}{2} }   \d x_1 \ldots \d x_{m}\\
\nonumber  =\quad&2c_{2m}\int_{\lambda_1<\ldots<\lambda_{2m}\in\HR}\det\,  \begin{bmatrix} P_{k}(u) & P_{k}(\lambda_1) & \ldots & P_{k}(\lambda_{2m})\end{bmatrix}_{k=0,\ldots, 2m} \; e^{-\sum\limits_{i=1}^{2m} \tfrac{\lambda_i^2}{2} }   \d \lambda_1 \ldots \d \lambda_{2m}\\
\nonumber    =\quad&2c_{2m}\int_{\lambda_1<\ldots<\lambda_{2m}\in\HR}\big[\prod_{i=1}^{2m} (\lambda_i-u)\big] \Delta(\lambda) \; e^{-\sum\limits_{i=1}^{2m} \tfrac{\lambda_i^2}{2} }   \d \lambda_1 \ldots \d \lambda_{n}\\
   =\quad &2\cJ_n(u),\nonumber
\end{align}
the fifth line line by \cref{b} and the last line by \cref{J_n}. Combining this with \cref{7} and \cref{8} we get
	\begin{align*}
	\cI_{2m}(u)&=(\cI_{2m}(u)+\cJ_{2m}(u)) - \cJ_{2m}(u)\\
	 &=\cJ_{2m}(u)-\frac{2c_{2m}}{m!}\sum_{j=0}^{m} \;\binom{m}{j} \;  \int_{\stackrel{x_1,\ldots, x_j \leq u}{u\leq x_{j+1}, \ldots, x_m}}(m-j)\det(\cL) \; e^{-\sum\limits_{i=1}^{m} \tfrac{x_i^2}{2} } \,  \d x_1 \ldots \d x_{m}\\
	 &=\cJ_{2m}(u)-\frac{2c_{2m}}{(m-1)!}\sum_{j=0}^{m-1} \;\binom{m-1}{j} \;  \int_{\stackrel{x_1,\ldots, x_j \leq u}{u\leq x_{j+1}, \ldots, x_m}}\det(\cL) \; e^{-\sum\limits_{i=1}^{m} \tfrac{x_i^2}{2} } \,  \d x_1 \ldots \d x_{m}
	\end{align*}
The function $\det(\cL)\,e^{-\sum\limits_{i=1}^{m-1} \tfrac{x_i^2}{2}}$ is invariant under permuting $x_1,\ldots,x_{m-1}$ (excluding $x_m$!). Therefore, we can apply \cref{auxiliary_lemma} to obtain
	\begin{equation*}
	\cI_{2m}(u)=\cJ_{2m}(u)-\frac{2c_{2m}}{(m-1)!} \,\int_{x_1,\ldots,x_{m-1} \in\HR} \left[\;\int_{x_m=u}^\infty \det(\cL) e^{-\tfrac{x_m^2}{2} } \d x_m \right] e^{-\sum\limits_{i=1}^{m-1} \tfrac{x_i^2}{2} } \,  \d x_1 \ldots \d x_{m-1}.
\end{equation*}
Furthermore, $x_m$ appears in one single column in $\cL$. Integrating over $x_m$ in therefore shows that
	\begin{align*}
	&\int_{x_m=u}^\infty \det(\cL) e^{-\tfrac{x_m^2}{2} } \d x_m\\
	=& \det \begin{bmatrix}P_{k}(u) &&  \left[G_{k}(x_i)\, P_{k}(x_i)\right]_{{i=1,\ldots,m-1 }} &&   G_{k}(u)& & G_k(\infty)-G_k(u)\end{bmatrix}_{k=0,\ldots,2m}\\
	=&\det \begin{bmatrix}P_{k}(u) &&  \left[G_{k}(x_i)\, P_{k}(x_i)\right]_{{i=1,\ldots,m-1 }} &&   G_{k}(u)& & G_k(\infty)\end{bmatrix}_{k=0,\ldots,2m}.
	\end{align*}
Let us denote by $\cM$ the last matrix:
$$\cM:=\begin{bmatrix}P_{k}(u) &&  \left[G_{k}(x_i)\, P_{k}(x_i)\right]_{{i=1,\ldots,m-1 }} &&   G_{k}(u)& & G_k(\infty)\end{bmatrix}_{k=0,\ldots,2m} \in \HR^{(2m+1)\times (2m+1)}.$$
From \cref{prop1} we get
\begin{align*}
	&\int_{x_1,\ldots,x_{m-1} \in\HR} \det(\cM) e^{-\sum\limits_{i=1}^{m-1} \tfrac{x_i^2}{2} } \,  \d x_1 \ldots \d x_{m-1}\\
	= \;&\sqrt{2\pi} (m-1)! \,2^{\,m-1}\,e^{-\tfrac{u^2}{2}}\,\sum_{1\leq i,j \leq m} \det(\Gamma_1^{i,j}) \;\det\begin{bmatrix}  P_{2j}(u)&P_{2i-1}(u)  \\ P_{2j-1}(u)&P_{2i-2}(u)   \end{bmatrix}.
	\end{align*}
where the matrix $\Gamma_1^{i,j}$ is defined as in \cref{gamma_matrices}.
Altogether, we therefore have
\begin{equation*}
	\cI_{2m}(u)=\cJ_{2m}(u)-c_{2m}\sqrt{2\pi} \,2^{\,m}\,e^{-\tfrac{u^2}{2}}\,\sum_{1\leq i,j \leq m} \det(\Gamma_1^{i,j}) \; \det\begin{bmatrix}  P_{2j}(u)&P_{2i-1}(u)  \\ P_{2j-1}(u)&P_{2i-2}(u)   \end{bmatrix}
	\end{equation*}
Finally, we substitute $c_{2m} =\big(2^m \prod_{i=1}^{2m}\Gamma\left(\tfrac{i}{2}\right)\big)^{-1}$ and put the minus into the determinant to obtain
\begin{equation*}
	\cI_{2m}(u)=\cJ_{2m}(u)+\frac{\sqrt{2\pi} \,e^{-\tfrac{u^2}{2}}}{\prod_{i=1}^{2m}\Gamma\left(\tfrac{i}{2}\right)}\,\sum_{1\leq i,j \leq m} \det(\Gamma_1^{i,j}) \; \det\begin{bmatrix} P_{2i-1}(u) & P_{2j}(u) \\ P_{2i-2}(u) & P_{2j-1}(u) \end{bmatrix}
	\end{equation*}
This finishes the proof.\qed
\subsection{The case when $n=2m-1$ is odd} \label{sec_n_odd}
 We proceed as in the preceeding section and can therefore be brief in our explanations. In \cref{a} we integrate over all the $\lambda_i$ with $i$ odd to obtain
	\begin{equation}\label{602}
\cI_{2m-1}(u)+\cJ_{2m-1}(u)= 2c_{2m-1}\sum_{j=0}^{m-1} \; \int_{\substack{ x_1\leq x_2\leq ...\leq x_{j} \leq u\\u\leq x_{j+1}\leq \ldots \leq x_{m-1}}}\det(\cX_j) \; e^{-\sum\limits_{i=1}^{m}\tfrac{x_i^2}{2}} \, \d x_1 \ldots \d x_{m-1},
\end{equation}
where $x_i:=y_{2i}$, $1\leq i\leq m-1$ and $\cX_j$ is the matrix
\begin{align*}
\cX_j=&\Big[ P_{k}(u) \hspace{0.6cm} \begin{bmatrix}G_{k}(x_i) - G_k(x_{i-1}) & P_{k}(x_i)\end{bmatrix}_{{i=1,\ldots j}}\;  \begin{bmatrix}G_{k}(x_{j+1}) - G_k(u) & P_{k}(x_{j+1})\end{bmatrix}\,\ldots  \\
&\hspace{1cm} \ldots\, \begin{bmatrix}G_{k}(x_i) - G_{k}(x_{i-1}) & P_{k}(x_i)\end{bmatrix}_{{i=j+2,\ldots, m-1}} \hspace{0.6cm} G_k(\infty)-G_k(x_{m-1}) \Big]_{k=0,\ldots,{2m-1}}.
\end{align*}
We have $\det(\cX_j)=\det(\cY_j)$, where
\begin{align*}
\cY_j=&\Big[ P_{k}(u) \hspace{0.6cm} \begin{bmatrix}G_{k}(x_i) & P_{k}(x_i)\end{bmatrix}_{{i=1,\ldots j}}\; \ldots  \\
&\hspace{1cm} \ldots\, \begin{bmatrix}G_{k}(x_i) - G_{k}(u) & P_{k}(x_i)\end{bmatrix}_{{i=j+1,\ldots, m-1}} & G_k(\infty)-G_k(u)\Big]_{k=0,\ldots,{2m-1}}.
\end{align*}
Permuting $x_{1},\ldots, x_{j}$ or permuting $x_{j+1},\ldots,x_{m}$ does not change the value of $\det(\cY_j)$. Hence, $\cI_{2m-1}(u)+\cJ_{2m-1}(u)$ is equal to
		\begin{equation*}
\frac{2c_{2m-1}}{(m-1)!}\sum_{j=0}^{m-1} \;\binom{m-1}{j}\; \int_{\substack{x_1,\ldots, x_j \leq u\\u\leq x_{j+1}, \ldots, x_{m-1}}}\det(\cY_j) \; e^{-\sum\limits_{i=1}^{m-1}  \tfrac{x_i^2}{2}} \, \d x_1 \ldots \d x_{m-1},
\end{equation*}
Using the multilinearity of the determinant we have
	\[\det(\cY_j) = \det(\cK) -  \det(\cM)- (m-1-j)\det(\cL),\]
where $\cK,\cM,\cL\in\HR^{2m\times 2m}$ are the matrices defined by
\begin{align*}
\cK&= \begin{bmatrix} P_{k}(u) & \begin{bmatrix}G_{k}(x_i) & P_{k}(x_i)\end{bmatrix}_{i=1,\ldots, m-1} & G_k(\infty)\end{bmatrix}_{k=0,\ldots,2m-1},\\
\cM&= \begin{bmatrix} P_{k}(u) & \begin{bmatrix}G_{k}(x_i) & P_{k}(x_i)\end{bmatrix}_{i=1,\ldots, m-1} & G_k(u)\end{bmatrix}_{k=0,\ldots,2m-1}, \\
\cL&=\begin{bmatrix}P_{k}(u) &  \begin{bmatrix}G_{k}(x_i) & P_{k}(x_i)\end{bmatrix}_{{i=1,\ldots,m-2}} &   G_{k}(u) & P_{k}(x_{m-1})&  G_k(\infty)-G_k(u)\end{bmatrix}_{k=0,\ldots,2m-1}.
\end{align*}
Integrating $\int_{x_{m-1}>u } \det(\cL) \,e^{-\tfrac{x_{m-1}^2}{2}}\;\d x$ replaces the $P_k(x_{m-1})$ in $\cL$ by $G_k(\infty)-G_k(u)$. Hence, this integral is equal to
\begin{align*}
\det \begin{bmatrix}P_{k}(u) &  \hspace{-0.2cm}\begin{bmatrix}G_{k}(x_i) & P_{k}(x_i)\end{bmatrix}_{{i=1,\ldots,m-2 }} &  \hspace{-0.2cm} G_{k}(u) & G_k(\infty)-G_k(u)&  G_k(\infty)-G_k(u)\end{bmatrix}_{k=0,\ldots,2m-1},
\end{align*}
which is equal to zero, because it has two columns which are equal. Therefore, we have
	\begin{align*}
	& \sum_{j=0}^{m-1}\binom{m-1}{j}\; \int_{\substack{x_1,\ldots, x_j \leq u\\u\leq x_{j+1}, \ldots, x_{m-1}}}(m-1-j)\det(\cL) \; e^{-\sum\limits_{i=1}^{m-1}  \tfrac{x_i^2}{2}}  \d x_1 \ldots \d x_{m-1}  \\
	 =&\sum_{j=0}^{m-1}\binom{m-1}{j}\; \int_{\substack{x_1,\ldots, x_j \leq u\\u\leq x_{j+1}, \ldots, x_{m-2}}} \left[\int_{x_{m-1}=u}^\infty  \det(\cL) \; e^{-\tfrac{x_{m-1}^2}{2}} \d x_{m-1}\right] e^{-\sum\limits_{i=1}^{m-2}  \tfrac{x_i^2}{2}} \d x_1 \ldots \d x_{m-2}\\
	  = &\;0.
	\end{align*}
Altogether, we have that $\cI_{2m-1}(u)+\cJ_{2m-1}(u)$ is equal to
	\begin{align*}
	\frac{2c_{2m-1}}{(m-1)!}\sum_{i=0}^{m-1} \;\binom{m-1}{j}\; \int_{\substack{x_1,\ldots, x_j \leq u\\u\leq x_{j+1}, \ldots, x_{m-1}}}(\det(\cK)+\det(\cM)) \; e^{-\sum\limits_{i=1}^{m-1} \tfrac{x_i^2}{2}}  \d x_1 \ldots \d x_{m-1}
\end{align*}
By construction, both $\det(\cK)$ and $\det(\cM)$ are invariant under any permutation of the $x_i$. We may apply \cref{auxiliary_lemma} to get
\begin{equation*}
\cI_{2m-1}(u)+\cJ_{2m-1}(u) = \frac{2c_{2m-1}}{(m-1)!} \int_{x_1,\ldots,x_{m-1}\in\HR}(\det(\cK)-\det(\cM)) \; e^{-\sum\limits_{i=1}^{m-1} \tfrac{x_i^2}{2}}  \d x_1 \ldots \d x_{m-1}
	\end{equation*}
Similar to \cref{50} we deduce that
\begin{equation*}
\frac{2c_{2m-1}}{(m-1)!} \int_{x_1,\ldots,x_{m-1}\in\HR}\det(\cK) \; e^{-\sum\limits_{i=1}^{m-1} \tfrac{x_i^2}{2}}  \d x_1 \ldots \d x_{m-1} = 2\cJ_{2m-1}(u),
\end{equation*}
so that
\begin{align}\label{102}
	\cI_{2m-1}(u)&=(\cI_{2m-1}(u)+\cJ_{2m-1}(u))-\cJ_{2m-1}(u)\\
	&= \cJ_{2m-1}(u)-\frac{2c_{2m-1}}{(m-1)!} \int_{x_1,\ldots,x_{m-1}\in\HR} \det(\cM)\; e^{-\sum\limits_{i=1}^{m-1} \tfrac{x_i^2}{2}}  \d x_1 \ldots \d x_{m-1}\nonumber
\end{align}
By \cref{prop2} we have
	\begin{align*}
	&\int_{x_1,\ldots,x_{m-1}\in\HR} \det(\cM) \, e^{-\sum\limits_{i=1}^{m-1} \tfrac{x_i^2}{2}}\, \d x_1 \ldots \d x_{m-1}\\
	= &(m-1)! \,2^{m-1}\,e^{-\tfrac{u^2}{2}}\,\sum_{0\leq i,j\leq m-1}  \det(\Gamma_2^{i,j})\, \det\begin{bmatrix}  P_{2i+1}(u) & P_{2j}(u)\\    P_{2i}(u) & P_{2j-1}(u) \end{bmatrix},
	\end{align*}
where the matrix $\Gamma_2^{i,j}$ is defined as in \cref{gamma_matrices}.
Combining this equation with \cref{102}, substituting $c_{2m-1} =\sqrt{2}\,\big(2^m \prod_{i=1}^{2m-1}\Gamma\left(\tfrac{i}{2}\right)\big)^{-1}$ and putting the minus into the determinant we get
	\[\cI_{2m-1}(u)=\cJ_{2m-1}(u)  +  \frac{\sqrt{2}\,e^{-\tfrac{u^2}{2}}}{\prod_{i=1}^{2m-1}\Gamma\left(\tfrac{i}{2}\right)}\,\sum_{0\leq i,j\leq m-1}  \det(\Gamma_2^{i,j})\, \det\begin{bmatrix} P_{2j}(u) &   P_{2i+1}(u)\\   P_{2j-1}(u) & P_{2i}(u)\end{bmatrix}.\]
This finishes the proof.\qed
{

}
\newpage
\appendix
\section{Sage code to compute $E(n,p)$}\label{appendix}
Below is the code for two \textsc{sage} scripts, that compute $E(2m+1,p)$ and $E(2m,p)$, respectively.

{\scriptsize
\begin{lstlisting}
#####################################################################
# The case n = 2m+1 is odd: Compute A and A_1, so that E(n,p)=1+A*A_1
# (A is the factor in front of the sum and A_1 is the sum)
#####################################################################
######### define the variables
p,m,n,i,j,x=var('p,m,n,i,j,x');
######### Set the value of m
m=2; n=2*m+1;
######### compute A
A(p)=sqrt(pi)*sqrt(p-1)^(n-2)*sqrt(3*p-2)/(prod(gamma(i/2) for i in (1..n)));
######### compute the determinants of the Gamma matrices
G_1 = matrix(m, lambda i,j: gamma(i+j+3/2));
G_1 = matrix(m, lambda i,j:
	det(G_1.matrix_from_rows_and_columns([0..i-1,i+1..m-1],[0..j-1,j+1..m-1])));
######### compute A_1
from sage.misc.mrange import cantor_product
L = list(cartesian_product_iterator([[1..m], [1..m]]));
A_1(p) = sum(G_1[i-1,j-1] * gamma(i+j-1/2) * ((1-2*i+2*j)/(3-2*i-2*j))
	* (-(3*p-2)/(4*(p-1)))^(1-i-j)
	* hypergeometric([2-2*i,1-2*j],[5/2-i-j],(3*p-2)/(4*(p-1))) for (i,j) in L);
A_1(p) = A_1(p).simplify_hypergeometric();
######### compute E(n,p)
E_n_odd(p)=A(p)*A_1(p);
E_n_odd(p)=E_n_odd(p).factor()
E_n_odd(p)=E_n_odd(p)+1
print(E_n_odd(p)) # prints the formula (wrap 'latex()' around it to get tex code)
\end{lstlisting}

\begin{lstlisting}
####################################################################################
# The case n = 2m is even: Compute B, B_1, B_2 and B_3, so that E(n,p)=B*(B_1-B_2+B_3)
# (B is the factor in front of the sum and B_1-B_2+B_3 is the sum
####################################################################################
######### define the variables
p,m,n,i,j,x=var('p,m,n,i,j,x');
######### Set the value of m
m=2; n=2*m;
######### compute B
B(p)=sqrt(p-1)^(n-2)*sqrt(3*p-2)/(prod(gamma(i/2) for i in (1..n)));
######### compute the determinants of the Gamma matrices
G_2 = matrix(m, lambda i,j: gamma(i+j+1/2));
G_2 = matrix(m, lambda i,j:
	det(G_2.matrix_from_rows_and_columns([0..i-1,i+1..m-1],[0..j-1,j+1..m-1])));
######### compute B_1
B_1(p) = sum(sqrt(pi) * G_2[0,j] * (gamma(2*j+2)/((-1)^j*4^j*gamma(j+1)))
	* ((p-2)^j*p)/((p-1)^j*(3*p-2))
	* hypergeometric([-j, 1/2],[3/2], -p^2/((3*p-2)*(p-2))) for j in [0..m-1]);
B_1(p) = B_1(p).simplify_hypergeometric();
######### compute B_2
B_2(p) = sum(G_2[0,j] * gamma(j+1/2) * (-4*(p-1)/(3*p-2))^(j+1) /2 for j in [0..m-1]);
######### compute B_3
from sage.misc.mrange import cantor_product
L = list(cartesian_product_iterator([[1..m-1], [0..m-1]]));
B_3(p) = sum(G_2[i,j] * gamma(i+j+1/2) * ((1-2*i+2*j)/(1-2*i-2*j))
	* (-4*(p-1)/(3*p-2))^(i+j)
	* hypergeometric([-2*j,-2*i+1],[3/2-i-j],(3*p-2)/(4*(p-1))) for (i,j) in L);
B_3(p) = B_3(p).simplify_hypergeometric();
######### compute E(n,p)
E_n_even(p)=B(p) * (B_1(p)-B_2(p)+B_3(p));
E_n_even(p)=E_n_even(p).factor()
print(E_n_even(p)) # prints the formula (wrap 'latex()' around it to get tex code)
\end{lstlisting}
}

\end{document}